\documentclass[12pt]{amsproc}
\usepackage[T2A]{fontenc}      
\usepackage[utf8]{inputenc}   
\usepackage[russian,english]{babel} 
\usepackage{fullpage}
\usepackage{amsmath}
\usepackage{amsfonts}
\usepackage{amssymb}
\usepackage{amsthm}
\usepackage[utf8]{inputenc}
\usepackage{breqn}
\usepackage{afterpage}
\usepackage{longtable}
\usepackage{indentfirst}
\usepackage{caption}
\usepackage{subcaption}
\usepackage{comment} 
\usepackage{enumitem}

\usepackage{graphicx}
\graphicspath{ {./images/} }
\newtheorem{theorem}{Theorem}[section]
\newtheorem{lemma}[theorem]{Lemma}
\newtheorem{corollary}[theorem]{Corollary}
\newtheorem{proposition}[theorem]{Proposition}

\theoremstyle{definition}
\newtheorem{definition}[theorem]{Definition}
\newtheorem{example}[theorem]{Example}

\newtheorem{theorem-definition}[theorem]{Theorem-Definition}

\theoremstyle{remark}
\newtheorem{remark}[theorem]{Remark}

\numberwithin{equation}{section}

\usepackage{color}
\usepackage{xcolor}

\begin{document}

\title{ Weighted $\alpha$-subharmonic measure}

\begin{author}[Kuldoshev K. K.]{Kuldoshev K. K.}
    \address{National University of Uzbekistan,  Tashkent, Uzbekistan}
\email{qobiljonmath@gmail.com}
\end{author}
\begin{author}[Salaeva D. S.]{Salaeva D. S.}

\address{National University of Uzbekistan,  Tashkent, Uzbekistan}
\email{rafaelsanti476@gmail.com}
\end{author}
 
\begin{abstract}  

In this paper, we introduce and study the weighted $\alpha$-subharmonic measure associated with a weight function $\psi$, extending the usual $\alpha$-subharmonic measure and reducing to it when $\psi \equiv -1$.

Furthermore, we study the relationship between the weighted
$\alpha$-subharmonic measure and $\alpha$-regular compact sets. We also obtain a characterization of $(\alpha,\psi)$-regularity in
terms of the continuity of the corresponding weighted $\alpha$-subharmonic measure.
Finally,  we prove that if  the weighted $\alpha$-subharmonic measure of the compact set $K$ are Hölder continuous with respect to $K$, then it is Hölder continuous everywhere.  
\end{abstract}

\maketitle 
\quad \textit{\textbf{Key words}}: $\alpha$-subharmonic function, $\alpha$-subharmonic measure, $\alpha$-polar set, the weighted $\alpha$-subharmonic measure, $\alpha$-regular compact set,  H\"older continuity.

\section{Introduction}

The notion of an $\alpha$-subharmonic function, denoted by
$\alpha\text{-}sh$, is defined in terms of a closed strictly positive
differential form $\alpha$ of bidegree $(n-1,n-1)$ by the condition
$dd^c u \wedge \alpha \geq 0$
in the sense of currents (see Section~\ref{alfa subharmonic function} for the definition). This class of functions was originally introduced
as a useful tool in the study of  $m$-subharmonic functions
(see, for example, \cite{ABSA}, \cite{BZ}, \cite{DSKS1}, \cite{DSKS2}, \cite{AS},   for the definition of an
$m$-subharmonic function).
This notion provides greater flexibility and
technical convenience. It was later developed into an independent class of
functions and investigated by various authors. In particular, a number of
its  properties were established and systematically studied in
\cite{ABIS}, \cite{ABSR},  \cite{VM},  and related works.

Before stating our main result, we introduce some definitions. 
Let $D \subset \mathbb{C}^n$ be a bounded $\alpha$-regular domain 
(see Section~\ref{alpha subharmonic measure} for  the relevant definitions), 
let $K$ be a compact subset of $D$, and let $\psi$ be a bounded negative function defined on $K$ and admitting a
negative $\alpha$-subharmonic extension to $D$.

We denote by $\mathcal{U}(K,D,\psi)$ the class of all $\alpha$-subharmonic 
functions $u$ in $D$ satisfying
\[
u|_K \leq \psi|_K, \qquad u|_D < 0.
\]
We define
\[
\omega_\alpha(z,K,D,\psi)
=
\sup \{ u(z) : u \in \mathcal{U}(K,D,\psi) \}.
\]
The upper semicontinuous regularization
\[
\omega_\alpha^*(z,K,D,\psi)
=
\overline{\lim_{w\to z}}\, \omega_\alpha(w,K,D,\psi)
\]
is called the \textit{weighted $\alpha$-subharmonic measure} of the compact set $K$ 
with respect to $D$ and the weight $\psi$.
A point \(z_0\in K\) is called
\textit{\((\alpha,\psi)\)-regular with respect to \(D\)} if
\[
\omega_{\alpha}^*(z_0,K,D,\psi)=\psi(z_0).
\]
It is called \textit{locally \((\alpha,\psi)\)-regular with respect to \(D\)}
if, for every neighborhood \(B\subset \mathbb{C}^n\) of \(z_0\),
\[
\omega_{\alpha}^*(z_0,K\cap \overline{B},D,\psi)=\psi(z_0).
\]
The compact set \(K\) is called \textit{\((\alpha,\psi)\)-regular}
respectively, \textit{locally \((\alpha,\psi)\)-regular}, with respect to
\(D\), if each of its points has the corresponding property. Since \(D\) is fixed throughout the paper, for convenience, we simply say
\((\alpha,\psi)\)-regular, respectively locally \((\alpha,\psi)\)-regular,
compact sets, omitting the phrase “with respect to \(D\)”.

We now present the main results of this paper.  The first result characterizes $(\alpha,\psi)$-regularity via the continuity of
the weighted $\alpha$-subharmonic measure and gives its elliptic interpretation
as the solution of the corresponding boundary value problem.

\begin{theorem} \label{weighet continuity}
Let $D\subset \mathbb{C}^n$, $K\subset D$, and let $\psi$ and
$\omega_{\alpha}(z,K,D,\psi)$ be defined as above. Then the following statements hold:
\begin{enumerate}
\item The function $\omega_{\alpha}(z,K,D,\psi)$ is continuous in $D$ if and only if
$\psi \in C(K)$ and $K$ is an $(\alpha,\psi)$-regular compact set.

\item If $K$ is an $(\alpha,\psi)$-regular compact set, then
$\omega_{\alpha}(z,K,D,\psi)$ is the unique solution, in the class of
$\alpha$-subharmonic functions, of the following elliptic boundary value problem
\begin{equation}\label{HO}
\begin{cases}
dd^c u\wedge \alpha= 0, & \text{on } D \setminus K, \\[6pt]
\displaystyle\lim_{\substack{z \to \partial D \\ z \in D}} u(z) = 0, & \\[6pt]
u|_{K} = \psi. &
\end{cases}
\end{equation}
\end{enumerate}
\end{theorem}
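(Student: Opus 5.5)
We prove part (1) in two directions and then deduce part (2) from it by a Perron-type argument with local balayage. Throughout we use the standard facts about $\alpha$-subharmonic functions stated earlier in the paper: the maximum principle, the gluing lemma, the Choquet lemma, and solvability of the Dirichlet problem $dd^c u\wedge\alpha=0$ in small balls with continuous boundary data. We also use that $D$ is $\alpha$-regular, so $D$ has a negative continuous $\alpha$-subharmonic exhaustion $\rho$ with $\rho\to0$ at $\partial D$.

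\textbf{Necessity in (1).} Assume $\omega_\alpha(z,K,D,\psi)$ is continuous. Then $\omega_\alpha=\omega_\alpha^*$, and $\omega_\alpha^*\in\mathcal U(K,D,\psi)$ up to the usual polar-set issue. Since $\omega_\alpha$ is now continuous, we get $\omega_\alpha\le\psi$ on $K$ everywhere. Conversely, the negative $\alpha$-sh extension $\tilde\psi$ of $\psi$ lies in $\mathcal U$, so $\omega_\alpha\ge\tilde\psi=\psi$ on $K$. Hence $\omega_\alpha|_K=\psi$. This shows that $\psi$ is continuous on $K$ and that every point of $K$ is $(\alpha,\psi)$-regular.

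\textbf{Sufficiency in (1).} Assume $\psi\in C(K)$ and $K$ is $(\alpha,\psi)$-regular. Then $\omega^*_\alpha=\psi$ on $K$, so $\omega_\alpha^*\in\mathcal U$ and $\omega_\alpha=\omega_\alpha^*$ is upper semicontinuous. The main task is lower semicontinuity, which I would get by a Walsh-type translation argument.
\begin{enumerate}
\item Fix $\varepsilon>0$. By continuity of $\psi$ and upper semicontinuity of $\omega^*$, which equals $\psi$ on $K$, there is a neighborhood $U$ of $K$ on which $\omega^*<\psi'+\varepsilon$. Here $\psi'$ is a continuous extension of $\psi$; the extension $\tilde\psi$ can be used after a Dini-type argument.
\item For small $|h|$, translation preserves $\alpha$-subharmonicity only when $\alpha$ has constant coefficients. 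In general I would instead regularize: take continuous $\alpha$-sh functions $u_j\downarrow\omega^*$, which exist on relatively compact subdomains by the approximation result from Section 2.
\item By Dini's theorem on $K$ together with regularity, $u_j<\psi+\varepsilon$ on $K$ for large $j$.
\item Glue $\max(u_j-\varepsilon,\ A\rho)$ near $\partial D$, with $A$ large, to obtain a continuous member of $\mathcal U$ (after subtracting $2\varepsilon$) that is $\ge\omega_\alpha-3\varepsilon$ on a given compact set.
\end{enumerate}
Thus $\omega_\alpha$ is a uniform limit of continuous functions on compacts, hence continuous. Continuity up to $\partial D$, meaning $\omega_\alpha\to0$, follows from $\omega_\alpha\ge A\rho$, since $A\rho\le\tilde\psi$ on $K$ for large $A$.

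I expect the main obstacle to be step 4, namely producing continuous competitors uniformly close to $\omega_\alpha$. The approximation must be performed in a way that keeps the constraint on $K$, and this is exactly where continuity of $\psi$ and regularity are used via Dini.

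\textbf{Part (2).} By part (1), $u=\omega_\alpha$ is continuous, satisfies $u=\psi$ on $K$, and satisfies $u\to0$ at $\partial D$.

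To see that $dd^cu\wedge\alpha=0$ on $D\setminus K$, take a ball $B\Subset D\setminus K$ and solve the Dirichlet problem in $B$ with boundary data $u$. Replacing $u$ by this solution inside $B$ gives a function in $\mathcal U$ that is $\ge u$, by the gluing lemma and the maximum principle. Maximality of $u$ then forces equality, so $u$ is $\alpha$-harmonic in $B$.

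For uniqueness, let $v$ be another $\alpha$-sh solution. Then $v\in\mathcal U$, so $v\le u$. On the other hand $u$ is $\alpha$-harmonic on $D\setminus K$, so $v-u$ is $\alpha$-subharmonic there, with boundary values $0$ on $K$ and limit $0$ at $\partial D$. The maximum principle gives $u\le v$ on $D\setminus K$, and hence $u=v$.
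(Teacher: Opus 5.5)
\textbf{Part (1).} Your part (1) follows the paper's proof closely.

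Necessity is the same argument. Note that $\omega_\alpha\le\psi$ on $K$ is immediate from the definition of $\mathcal U(K,D,\psi)$, so your remark about polar sets and continuity is unnecessary there.

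For sufficiency, the paper also approximates $\omega_\alpha^*$ by smooth nonpositive $\alpha$-subharmonic $u_j$, forces $u_j<\psi+\varepsilon$ on $K$, and glues to obtain a competitor. Replacing the paper's Hartogs lemma by Dini's theorem on $K$ is fine provided the approximating sequence is decreasing, since then $u_j|_K\downarrow\psi$ with $\psi\in C(K)$. If the approximation is only pointwise, the Hartogs lemma is what you need.

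The paper glues with $\omega^*_\alpha$ itself on $G_\varepsilon=\{\omega^*_\alpha<-\varepsilon\}$, where $u_j-\varepsilon\le-\varepsilon\le\omega^*_\alpha$ on $\partial G_\varepsilon$. If you glue with $A\rho$ instead, the order of choices matters. For a fixed subdomain $G$ carrying $u_j$, a large $A$ makes $A\rho$ very negative on $\partial G$, while $u_j-\varepsilon\ge\inf_K\psi-\varepsilon$, so the gluing condition fails. You must glue on $\{A\rho<-\varepsilon\}$ and choose $G\Supset\{A\rho\le-\varepsilon\}$ only after fixing $\varepsilon$ and $A$, and then choose $j$. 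You also do not need the competitor to be continuous, and the paper's definition of an $\alpha$-regular domain does not give a continuous $\rho$. The bound $u_j-2\varepsilon\le\omega_\alpha\le u_j$ on the compact set already gives local uniform approximation by the continuous $u_j$.

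\textbf{Part (2): the genuine gap is uniqueness.} You correctly get $v\le u$ from $v\in\mathcal U(K,D,\psi)$. But you then apply the maximum principle to $v-u$, which yields $v\le u$ again, not $u\le v$.

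For the reverse inequality you need two things:
\begin{enumerate}
\item $u-v$ must be $\alpha$-subharmonic on $D\setminus K$. This holds because $v$ satisfies $dd^cv\wedge\alpha=0$ there, not because $u$ does.
\item $\limsup_{z\to\zeta}(u-v)(z)\le 0$ at every $\zeta\in\partial K$, that is, $\liminf_{z\to\zeta,\,z\notin K}v(z)\ge\psi(\zeta)$.
\end{enumerate}
The lower bound in the second item does not follow from the pointwise condition $v|_K=\psi$, since $v$ is only upper semicontinuous. You must either read the condition $u|_K=\psi$ in \eqref{HO} as attainment of boundary values from $D\setminus K$, or bring in an extra tool such as a domination principle for $\alpha$-potentials of measures supported on $K$. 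The paper settles this step with a one-line appeal to the maximum principle and does not supply this inequality either. Still, your written argument proves the wrong inequality.

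\textbf{A smaller point in part (2).} You invoke part (1) to get continuity of $\omega_\alpha$, which requires $\psi\in C(K)$. That is not a hypothesis of (2): regularity only makes $\psi=\omega_\alpha^*|_K$ upper semicontinuous. The paper obtains $dd^c\omega_\alpha\wedge\alpha=0$ on $D\setminus K$ from Proposition~\ref{alpha-harmonic}, which holds for arbitrary $E$ and uses Poisson integrals of an increasing sequence of competitors. Your balayage step should likewise use the Poisson integral of an upper semicontinuous datum rather than the Dirichlet problem with continuous data. Because regularity gives $\omega_\alpha=\omega_\alpha^*\in\mathcal U(K,D,\psi)$, your one-step replacement on a ball $B\Subset D\setminus K$ then goes through directly.
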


The next theorem describes the relation between local $(\alpha,\psi)$-regularity
and local $\alpha$-regularity. It also shows that, under an additional extension
assumption on the weight function $\psi$, local $(\alpha,\psi)$-regularity is
equivalent to global $(\alpha,\psi)$-regularity.

\begin{theorem}\label{thm: regularity}
Let $K\subset D\subset \mathbb{C}^n$ be a compact set and let $\psi\in C(K)$.
Then the following statements hold:
\begin{enumerate}
    \item A point $z_0\in K$ is locally $(\alpha,\psi)$-regular if and only if
    it is locally $\alpha$-regular, i.e., for every neighborhood
    $B\subset D$ of $z_0$,
    \[
    \omega_{\alpha}^{*}(z_0,K\cap \overline{B},D)=-1.
    \]

    \item Assume that $\psi$ admits a negative strictly $\alpha$-subharmonic
    extension to a neighborhood $D^+$ of $\overline{D}$, i.e., there exists
    a function $\widetilde{\psi}$ which is strictly $\alpha$-subharmonic in
    $D^+$ and satisfies
    \[
    \widetilde{\psi}|_K=\psi|_K,
    \qquad
    \widetilde{\psi}<0 \quad \text{on } D.
    \]
\end{enumerate}
Then a point $z_0\in K$ is locally $(\alpha,\psi)$-regular if and only if it is
$(\alpha,\psi)$-regular.
\end{theorem}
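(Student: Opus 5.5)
For part (1) I will compare the weighted measure with the unweighted one by linear sandwiching, using that $\psi$ is continuous and negative on $K$. Fix $z_0\in K$ and a closed ball $\overline B$ around it, and set $E=K\cap\overline B$. Since $\psi$ is bounded and negative, there are constants $0<m\le M$ with $-M\le\psi\le -m$ on $K$. Throughout I write $\omega_\alpha(z,E,D)$ for the case $\psi\equiv-1$.

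Suppose first that $z_0$ is locally $\alpha$-regular. Given $\varepsilon>0$, continuity of $\psi$ lets me shrink $B$ so that $\psi\ge\psi(z_0)-\varepsilon$ on $E$. Then $u=(|\psi(z_0)|+\varepsilon)\,\omega^*_\alpha(z,E,D)$ is negative and $\alpha$-subharmonic. Off an $\alpha$-polar set it is at most $-(|\psi(z_0)|+\varepsilon)\le\psi$ on $E$. The exceptional polar set is removed in the standard way: add $\delta v$, where $v$ is a negative $\alpha$-sh function equal to $-\infty$ there, and let $\delta\to0$. This uses the earlier result that $\omega=\omega^*$ off a polar set, and that negligible sets are polar. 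To pass from $E$ to $K$, take $\max(u,\,M\omega^*_\alpha(z,K\setminus B',D))$-type gluing, or more simply compare with $\omega_\alpha(z,E,D,\psi|_E)$, since the definition is monotone in $K$. This gives
\[
\omega^*_\alpha(z_0,E,D,\psi)\le -(|\psi(z_0)|+\varepsilon)\,\omega^*\!\!\!\!\quad\text{at }z_0 = \psi(z_0)-\varepsilon .
\]
For the reverse inequality, note that $\omega_\alpha(\cdot,E,D,\psi)\ge\widetilde\psi$-type minorants, because the negative $\alpha$-sh extension of $\psi$ belongs to the class. Upper semicontinuity of the regularization then gives $\omega^*\ge\psi(z_0)$ by the upper bound near $z_0$, and since $\omega^*\le\psi$ except on a polar set, $\omega^*(z_0)\ge\psi(z_0)$ holds trivially from the extension. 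Hence equality. Conversely, if $z_0$ is locally $(\alpha,\psi)$-regular, then $\omega_\alpha(z,E,D,\psi)/m$ is admissible for the unweighted problem on $E$, because $\psi/m\le-1$ there. Therefore $\omega^*_\alpha(z_0,E,D)\le\psi(z_0)/m$. This is not yet $-1$, so I localize again: choosing $B$ small makes $\psi$ nearly constant on $E$, and dividing by $|\psi(z_0)|-\varepsilon$ instead of $m$ gives $\omega^*_\alpha(z_0,E,D)\le -\frac{|\psi(z_0)|}{|\psi(z_0)|-\varepsilon}$. Since the value is always $\ge -1$, letting $\varepsilon\to0$ gives $-1$.

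For part (2), global regularity trivially implies local regularity, because shrinking $K$ to $K\cap\overline B$ enlarges the admissible class, so $\psi(z_0)\le\omega^*(z_0,K)\le\omega^*(z_0,K\cap\overline B)$. The nontrivial direction is local $\Rightarrow$ global, and this is where the strictly $\alpha$-sh extension $\widetilde\psi$ is needed. Fix $\varepsilon>0$ and a small ball $B$ around $z_0$. Let $w=\omega^*_\alpha(z,K\cap\overline B,D,\psi)$, so that $w(z_0)=\psi(z_0)$ and $w\le\psi$ q.e. on $K\cap\overline B$. I want to glue $w$ with the function $\widetilde\psi-\varepsilon'$ (suitably corrected) to obtain an admissible function for $K$. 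Strict $\alpha$-subharmonicity gives $dd^c\widetilde\psi\wedge\alpha\ge c\,\beta^{n-1}\wedge\ldots$. I then use the function $\widetilde\psi+ A(|z-z_0|^2-r^2)$ with $A$ small, which is $\alpha$-sh, agrees with $\psi$ at $z_0$ up to $-Ar^2$, and lies below $\psi-\eta$ on $K\setminus B$, or rather it lies below $\psi$ everywhere on $K$. The candidate is
\[
u=\max\bigl(w + \text{const},\ \widetilde\psi + h\bigr) \text{ in } B,\qquad u=\widetilde\psi+h \text{ outside } B,
\]
where $h$ is chosen so that the $\widetilde\psi+h$ branch dominates near $\partial B$. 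The boundary condition $u<0$ on $D$ must also be checked. Because $\widetilde\psi<0$ only on $D$ and not near $\partial D$, I will fix this by taking $\max$ with $M\,\omega^*_\alpha(z,\overline{D'},D)$-type barriers coming from $\alpha$-regularity of $D$. I expect this gluing to be the main obstacle: making the branches cross correctly on $\partial B$ needs $w\le\psi-\eta$ there, which is not given. I would first try to obtain it by replacing $\psi$ on $K\cap\overline B$ by $\psi-\eta(|z-z_0|^2)$ via the strict-sh perturbation, so that the local problem has a strict gap on $\partial B$. Then $u(z_0)\ge\psi(z_0)-\varepsilon$, and letting $\varepsilon\to0$ finishes the proof.
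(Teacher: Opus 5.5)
The main gap is in part (2), where you have the two implications reversed. Your chain $\psi(z_0)\le\omega^*_\alpha(z_0,K,D,\psi)\le\omega^*_\alpha(z_0,K\cap\overline B,D,\psi)$ only bounds the localized measure from below. Local regularity requires the opposite bound $\omega^*_\alpha(z_0,K\cap\overline B,D,\psi)\le\psi(z_0)$.

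Read the other way, the same monotonicity shows that local $\Rightarrow$ global is the trivial direction. So the gluing you identify as the main obstacle is not needed, while global $\Rightarrow$ local is left unproved. It cannot be trivial. In the paper's example $K=\{|z|=1\}\cup\{0\}\subset B(0,2)$ with $\psi\equiv-1$, the point $0$ is regular but not locally regular. Your argument never uses the strict-extension hypothesis that excludes this.

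The missing idea is to push unweighted competitors for the small piece $K\cap\overline B_r$ into the weighted class for all of $K$. By strictness, $\widetilde\psi-\varepsilon|z-z_0|^2$ is $\alpha$-subharmonic on $D$ for some $\varepsilon>0$; take $r$ with $\max_{\overline B_r}\widetilde\psi+\varepsilon r^2<0$. For every $u\in\mathcal U(K\cap\overline B_r,D)$, the function $\varphi=\varepsilon r^2(u+1)+\widetilde\psi-\varepsilon|z-z_0|^2$ is negative on $D$. It also satisfies $\varphi\le\psi$ on $K$: on $K\cap\overline B_r$ because $u\le-1$, and on $K\setminus\overline B_r$ because $u<0$ and $|z-z_0|\ge r$. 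Hence $\varphi\le\omega^*_\alpha(\cdot,K,D,\psi)$. Evaluating at $z_0$ and using global regularity gives $\varepsilon r^2\bigl(\omega^*_\alpha(z_0,K\cap\overline B_r,D)+1\bigr)\le 0$, i.e.\ local $\alpha$-regularity. Part (1) then converts this into local $(\alpha,\psi)$-regularity.

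Part (1) also fails as written, although you have the right ingredients. Each rescaling is attached to the wrong implication, and the inequality you draw from it points the wrong way. Membership in an admissible class gives a \emph{lower} bound for the extremal function:
\begin{itemize}
\item $(|\psi(z_0)|+\varepsilon)\,\omega^*_\alpha(\cdot,E,D)\in\mathcal U(E,D,\psi)$ yields $\omega^*_\alpha(z_0,E,D,\psi)\ge\psi(z_0)-\varepsilon$. It does not give the upper bound you display, which would contradict $\omega^*_\alpha(z_0,E,D,\psi)\ge\psi(z_0)$.
\item Treating $\omega_\alpha(\cdot,E,D,\psi)/m$ as an unweighted competitor yields only the vacuous $\omega^*_\alpha(z_0,E,D)\ge\psi(z_0)/m$.
\item Your final bound $-|\psi(z_0)|/(|\psi(z_0)|-\varepsilon)$ lies below $-1$ and is impossible.
\end{itemize}

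Swap the two rescalings. For locally $\alpha$-regular $\Rightarrow$ locally $(\alpha,\psi)$-regular, divide each $v\in\mathcal U(E,D,\psi)$ by $|\sup_E\psi|$ to get $\omega^*_\alpha(z_0,E,D,\psi)\le\sup_E\psi$. For the converse, multiply each $u\in\mathcal U(E,D)$ by $|\inf_E\psi|$ to get $\omega^*_\alpha(z_0,E,D)\le\psi(z_0)/|\inf_E\psi|$. Then shrink the ball using continuity of $\psi$, and pass to an arbitrary $B$ by monotonicity in the set. The opposite bounds $\ge\psi(z_0)$ and $\ge-1$ are automatic.

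Working with individual competitors rather than the regularized extremal function also makes your polar-set correction unnecessary. This is exactly the two-sided comparison of Lemma~\ref{connection inequality}, which the paper uses in contrapositive form.
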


It is well known that H\"older continuity plays an important role in the study
of dynamical systems of several complex variables. In \cite{KKRK}
(see, for example,  \cite{SAZA}, \cite{JS1} for complex Green functions in the class of
plurisubharmonic functions), it was proved that if the weighted
$m$-subharmonic measure $\omega_{m}^*(z,K,D,\psi)$ satisfies a H\"older-type
estimate with respect to the compact set $K$, then it is H\"older continuous
throughout the domain. In what follows, we present a theorem that extends this
result to the setting of weighted $\alpha$-subharmonic measures.
Let $D \subset \mathbb{C}^n$ be a strongly $\alpha$-regular domain
(see Section~\ref{alfa subharmonic function} for the definition).

\begin{theorem} \label{Holder}
Let $K\subset D$ be a compact set, and let $\psi:K\to\mathbb{R}$ be a
H\"older continuous function. Then $\omega_{\alpha}(z,K,D,\psi)$ is
H\"older continuous in $D$ if and only if there exist constants $C>0$ and
$0<\lambda\leq 1$ such that
\begin{equation}\label{Holdercondition}
\left|\omega_{\alpha}(z,K,D,\psi)-\psi(w)\right|
\leq C(\operatorname{dist}(z,K))^{\lambda}
\end{equation}
holds for every $z$ in some neighborhood of $K$, where  $ w\in K $ is a point closest to $z$, i.e., $|z-w|=\mathrm{dist}(z,K).$
\end{theorem}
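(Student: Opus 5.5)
The direction ``Hölder in $D$ $\Rightarrow$ \eqref{Holdercondition}'' is immediate once we know $\omega_\alpha(w,K,D,\psi)=\psi(w)$ for $w\in K$. Hölder continuity makes $\omega_\alpha$ continuous, so by Theorem~\ref{weighet continuity}(1) the set $K$ is $(\alpha,\psi)$-regular and $\omega_\alpha=\omega_\alpha^*=\psi$ on $K$. Then, with $w$ a closest point, $|\omega_\alpha(z)-\psi(w)|=|\omega_\alpha(z)-\omega_\alpha(w)|\le C|z-w|^\lambda=C\operatorname{dist}(z,K)^\lambda$.

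For the converse we follow the Walsh/Sadullaev type argument used in \cite{KKRK}, with translations replaced by the invariance available for $\alpha$-sh functions. We would assume (or check from the definitions in Section~\ref{alfa subharmonic function}) that $\alpha$ has constant coefficients, so translates of $\alpha$-sh functions are $\alpha$-sh. We also need that the maximum of $\alpha$-sh functions is $\alpha$-sh and that gluing preserves $\alpha$-subharmonicity.

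First, \eqref{Holdercondition} with $z\to w\in K$ gives $\omega_\alpha^*(w)=\psi(w)$. Since $\psi$ is continuous, Theorem~\ref{weighet continuity} makes $\omega:=\omega_\alpha(\cdot,K,D,\psi)$ continuous on $D$. Because $D$ is strongly $\alpha$-regular, there is a barrier $\rho$, continuous and $\alpha$-sh, with $\rho\le -c\,\operatorname{dist}(z,\partial D)$ or a Hölder-type bound. Comparing $\omega\ge A\rho$, valid near $\partial D$ since $\psi$ is bounded, shows $\omega$ is Hölder near $\partial D$. Together with \eqref{Holdercondition} this gives a uniform estimate
\[
\omega(z)\ge \psi(w)-C_1\delta^\lambda \quad\text{whenever } \operatorname{dist}(z,K\cup\partial D)\le\delta .
\]

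Next fix a small $h\in\mathbb{C}^n$ with $|h|=\delta$ and set $D_\delta=\{z\in D:\operatorname{dist}(z,\partial D)>\delta\}$. Consider $v(z)=\omega(z+h)-C_2\delta^{\lambda}$ on $D_\delta$. The aim is to show $v\le\omega$ on $D_\delta$, using the glued function
\[
u=\begin{cases}\max(\omega, v) & \text{on } D_\delta,\\ \omega & \text{on } D\setminus D_\delta,\end{cases}
\]
and placing this $u$ in $\mathcal{U}(K,D,\psi)$. Two checks are needed.
\begin{enumerate}
\item Near $\partial D_\delta$, the boundary Hölder estimate gives $v\le -C_2\delta^\lambda+C_3\delta^{\lambda}<\omega$ for $C_2$ large, so the gluing is legitimate.
\item On $K$, for $z\in K$ we have $\operatorname{dist}(z+h,K)\le\delta$. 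Taking $w'\in K$ closest to $z+h$, \eqref{Holdercondition} and the Hölder continuity of $\psi$ (exponent $\le\lambda$ after shrinking) give
\[
\omega(z+h)\le\psi(w')+C\delta^\lambda\le\psi(z)+C'\delta^\lambda .
\]
Hence $v\le\psi$ on $K$ for $C_2\ge C'$.
\end{enumerate}
Thus $u\le\omega$, so $\omega(z+h)-\omega(z)\le C_2|h|^\lambda$ on $D_\delta$. By symmetry in $h$, and using the boundary strip estimate separately, $\omega$ is Hölder on $D$.

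The main obstacle is the behaviour near $\partial D$. We need both that $\omega$ is Hölder there, which requires quantitative information extracted from strong $\alpha$-regularity, and that the gluing step is correct in the $\alpha$-sh class. In the gluing, the danger is the region where the translated point $z+h$ lies outside $D$, and also keeping $u<0$. We would handle this by defining $v$ only on $D_\delta$ and using the barrier comparison $\omega\ge A\rho$ to dominate $v$ on the strip $\{\operatorname{dist}(\cdot,\partial D)\approx\delta\}$. If strong $\alpha$-regularity only provides Hölder exhaustion functions, we would use those with the appropriate exponent and reduce $\lambda$ accordingly.
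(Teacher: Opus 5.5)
Your argument is correct, up to the translation-invariance assumption you flag. The paper's proof relies on the same step without comment: it asserts that $\tilde\omega(z+t)$ is $\alpha$-subharmonic, although Section~2 allows $C^1$ coefficients. So this is not a weakness relative to the paper. The core comparison is also the paper's: show $\omega(\cdot+h)-C\delta^{\lambda}\in\mathcal{U}(K,D,\psi)$ using \eqref{Holdercondition} at a nearest point $w'$ together with the Hölder continuity of $\psi$, then symmetrize in $h$.

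Where you differ is at $\partial D$.

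The paper extends $\omega$ to $D^+$ by the defining function $\rho$ outside $D$ and translates the extended function. Since $\rho$ is Lipschitz and vanishes on $\partial D$, the translate is at most $L\delta$ wherever $z+t\notin D$. Subtracting $C_3\delta^{\lambda_2}$ therefore gives a competitor on all of $D$ in one stroke, with no gluing inside $D$ and no case split. As written, that extension needs $\omega\ge\rho$ near $\partial D$. Gluing with $c\rho$ instead, where $\omega\ge c\rho$ as in Proposition~\ref{extends}, fixes this. So the paper's step rests on the same barrier comparison you use explicitly.

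You instead restrict the translate to $D_\delta$, glue inside $D$, and control the strip by $0\ge\omega\ge A\rho\ge -AL\,\mathrm{dist}(\cdot,\partial D)$. Two slips need fixing:
\begin{itemize}
\item The bound you need is $\rho\ge -L\,\mathrm{dist}(z,\partial D)$, which follows from $\rho\in C^2$ and $\rho|_{\partial D}=0$. It is not $\rho\le -c\,\mathrm{dist}(z,\partial D)$.
\item The gluing check should read $v\le -C_2\delta^{\lambda}<-2AL\delta\le\omega$ on $\{\delta<\mathrm{dist}(\cdot,\partial D)\le 2\delta\}$, once $C_2>2AL$.
\end{itemize}

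Also, near $\partial D$ you obtain only $|\omega|\le M\,\mathrm{dist}(\cdot,\partial D)$, not Hölder continuity. This still suffices:
\begin{itemize}
\item If one of $z',z''$ lies within $|z'-z''|$ of $\partial D$, then both lie within $2|z'-z''|$ of it, and $|\omega(z')-\omega(z'')|\le 3M|z'-z''|$.
\item Otherwise both points lie in $D_{|z'-z''|}$, and the translation estimate applies with $h=\pm(z''-z')$.
\end{itemize}

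Your route needs only a barrier on $D$ with $|\rho|\le L\,\mathrm{dist}(\cdot,\partial D)^{\gamma}$, with no $\alpha$-subharmonicity of $\rho$ across $\partial D$. It therefore works beyond strongly $\alpha$-regular domains. The paper's route is shorter when strong $\alpha$-regularity is available.
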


It should be emphasized that the three theorems stated above, which constitute the main results of this paper, are new even in the unweighted case, i.e., when $\psi \equiv -1$.

This paper is organized as follows.
In Section~\ref{alfa subharmonic function}, we recall the definition of $\alpha$-subharmonic
functions and collect several preliminary notions and properties. We also
establish a Hartogs-type lemma for families of $\alpha$-subharmonic functions.
In Section~\ref{alpha subharmonic measure}, we study the $\alpha$-subharmonic measure and state several new functional properties.
In Section~\ref{weighted alpha measure}, we introduce the weighted $\alpha$-subharmonic measure
associated with a weight function $\psi$ and establish its main properties,
including monotonicity, convergence properties, and $\alpha$-harmonicity
outside the given set.
In Section~\ref{regularity of compacts}, we investigate $(\alpha,\psi)$-regular compact sets.
In particular, we prove Theorem~\ref{weighet continuity} and Theorem~\ref{thm: regularity}.
Finally, in Section~\ref{Holder section}, we prove Theorem~\ref{Holder}, which gives
a necessary and sufficient condition for the H\"older continuity of the function $\omega_{\alpha}(z,K,D,\psi)$ in the whole domain.

\textbf{Acknowledgments.}
The authors would like to thank the V.I. Romanovskiy Institute of Mathematics
of the Academy of Sciences of the Republic of Uzbekistan for its warm welcome
and excellent working conditions.
This work has received funding from the Ministry of Higher Education, Science and Innovation of the Republic of Uzbekistan under State Grant No. FL-9524115114.

\section{$\alpha$-subharmonic functions and Hartogs lemma for a family of $\alpha$-subharmonic function} \label{alfa subharmonic function}

 Let $D$ be a bounded domain in $\mathbb{C}^n$ and $\alpha $ be closed and strictly positive differential form of degree $(n-1,n-1)$ in $D$
\begin{equation}\label{alfa dif for}
\alpha = \left(\frac{i}{2}\right)^{n-1} \sum_{j,k=1}^{n} {\alpha_{jk}(z) \ dz[j] \wedge d\overline{z}[k]}, 
\end{equation}
where 
$\alpha_{jk} \in C^1(D)$, 
$$dz[j]=dz_{1}\wedge dz_{2}...\wedge dz_{j-1}\wedge dz_{j+1}\wedge...\wedge dz_n, $$
$$d\overline{z}[k]=d\overline{z}_{1}\wedge d\overline{z}_{2}...\wedge d\overline{z}_{k-1}\wedge d\overline{z}_{k+1}\wedge...\wedge d\overline{z}_{n}.$$

    \begin{definition}[see \cite{ABIS}] \label{alpha-definition}
A function $u \in L_{loc}^{1} (D)$ is called \textit{ $\alpha$-subharmonic} in $D$, if it is strongly upper semicontinuous in $D$, i.e., $u(z)$ is upper semicontinuous in $D$, in addition, for every $A \subset D$ which has full Lebesgue measure in a neighbourhood of $z^0 \in D$, the following relation

\[
\limsup_{\substack{z\to z^0\\ z\in A}} u(z)=u(z^0)
\]
holds and 
 the current  $dd^{c}u \wedge \alpha$ is positive i.e.,
for any positive test function $\omega$ in $D$, we have 
\begin{equation} \label{current}
    \int u \alpha\wedge dd^c\omega \ge 0, 
\end{equation}
where $d=\partial +\overline{\partial}, \, d^c=\frac{\overline{\partial}-\partial}{4i}$ (for further details on currents, see \cite{JP}).
Moreover,
 a function $u(z)$ is called \textit{strictly $\alpha$-subharmonic} in $D$, if for any compact domain $G \Subset D$, there exists $\varepsilon>0$ such that the function $ u(z)- \varepsilon|z|^2$ is $\alpha$-subharmonic in $G$.
\end{definition}

Note that if \(u \in C^2(D)\), then the condition \(dd^c u \wedge \alpha \geq 0\) is equivalent to the above condition \eqref{current}.
The class of $\alpha$-subharmonic functions in $D$ is denoted by
$\alpha\text{-}sh(D)$ and for convenience, the function
$u(z)\equiv -\infty$ is also considered to belong to this class.

When  $\alpha=\beta^{n-1}$, where $\beta=dd^c|z|^2$, the class of $\alpha$-subharmonic functions coincides with the class of subharmonic functions. In addition, every plurisubharmonic function is $\alpha$-subharmonic, i.e., $psh(D) \subset \alpha\text{-}sh(D)$. However, the subharmonic and $\alpha$-subharmonic functions are different classes, even in $\mathbb{C}^2$. For example, let us be given a differential form $\alpha=\frac{i}{2}(3 dz_1 \wedge d \overline{z}_1 +dz_2 \wedge d \overline{z}_2)$. Consider the following functions:

\begin{itemize} 
    \item $u_1(z)=2|z_1|^2-|z_2|^2$ is subharmonic, but not $\alpha$-subharmonic;
    \item $u_2(z)=-3|z_1|^2+2|z_2|^2$ is $\alpha$-subharmonic, but not subharmonic;
    \item $u_3(z)=|z_1|^2+|z_2|^2$ is $\alpha$-subharmonic and subharmonic.
\end{itemize}

  The class of $\alpha$-subharmonic functions has been proven to have the following properties(see \cite{ABIS}, \cite{ABSR}):

\begin{enumerate}
  \item  convex linear combinations of $\alpha$-subharmonic functions is $\alpha$-subharmonic, i.e., 
  \[
 \lambda, \, \mu \ge0, \,  u_1, u_2 \in \alpha\text{-}sh(D) \implies \lambda u_1+\mu u_2 \in \alpha\text{-}sh(D);
  \]

  \item  the limit function of the decreasing or uniformly convergence sequence of $\alpha$-subharmonic functions is $\alpha$-subharmonic, i.e.,
  \[
  \{u_j\} \subset \alpha\text{-}sh(D), \, \, u_j(z) \downarrow u(z)  \implies u \in \alpha\text{-}sh(D);
  \]
\[
\{u_j\} \subset \alpha\text{-}sh(D), \, \, u_j \rightrightarrows u \implies u \in \alpha\text{-}sh(D);
\]

\item maximum of the finite number of $\alpha$-subharmonic functions is $\alpha$-subharmonic, i.e.,
\[
u_1, u_2 \in \alpha\text{-}sh(D), u(z)=\max\{u_1(z),u_2(z)\} \implies u \in \alpha\text{-}sh(D);
\]
     
 \item \label{family} let $\{u_{\lambda}\}, \, \lambda \in \Lambda$ be a family of locally uniformly bounded $\alpha$-subharmonic functions and $u(z)=\sup\limits_{\lambda \in \Lambda}u_{\lambda}(z)$. Then, the upper regularization $u^*(z)$ of the function $u(z)$ is $\alpha$-subharmonic in $D$.

\item \label{maximum principle}
An $\alpha$-subharmonic function cannot attain its maximum in the domain $D$
unless it is constant in $D$.
 \end{enumerate}

 In \cite{ABIS}, it has been shown that the differential operator $\Delta_{\alpha}u=dd^cu \wedge \alpha$ defined in the class $C^2(D)$ is self-adjoint (see \cite{MR}, for definition of self-adjoint). Moreover, it is not difficult to show that, for a strictly positive differential form $\alpha$, the operator $dd^cu \wedge \alpha$ is elliptic. It is known from the theory of elliptic operators that the operator $\Delta_{\alpha}$ has a fundamental solution $K(z,w)$ and it represents many properties (see \cite{DT}, \cite{MR}) which can be used to learn the class of $\alpha$-subharmonic functions. For example, using the fundamental solution $K(z,w)$, any function $u \in \alpha\text{-}sh(D)$ can be written in the following form in $G \Subset D$
\[
u(z)=\int\limits_G K(z,w) d \mu(w) +g(z),
\]
where $\mu=dd^cu \wedge \alpha$ and the function $g(z)$ is $\alpha\text{-}$harmonic, i.e., $dd^cg \wedge \alpha=0$ in $G$ (see \cite{ABIS}, \cite{VM}).

We recall the following facts from \cite{ABIS}. One of the important properties
of $\alpha$-subharmonic functions is an equivalent characterization in terms
of the Poisson kernel $P_{\alpha}(z,\xi)$ of a ball. The kernel $P_{\alpha}(z,\xi)$, associated with the above fixed differential form $\alpha$, has properties analogous to those of the classical Poisson kernel of a ball. More precisely, for every fixed
$\xi\in \partial B(z^0,r)$, the function $P_{\alpha}(z,\xi)$ is
$\alpha$-harmonic with respect to $z\in B(z^0,r)$, and for every fixed
$z\in B(z^0,r)$, it is continuous with respect to
$\xi\in \partial B(z^0,r)$. Moreover,
\[
P_{\alpha}(z,\xi)\geq 0.
\]

If $u$ is $\alpha$-subharmonic in $D$, then for every 
ball $B(z^0,r)\Subset D$, the following Poisson-type inequality holds
\begin{equation}\label{puasson}
u(z^0)\leq
\int_{\partial B(z^0,r)} u(\xi)P_{\alpha}(z^0,\xi)\,d\sigma(\xi),  \quad \forall z \in B(z^0,r),
\end{equation}
where $P_{\alpha}(z,\xi)$ denotes the Poisson kernel of the ball
$B(z^0,r)$. Conversely, if an upper semicontinuous function $u$ satisfies
\eqref{puasson} for every ball $B(z^0,r)\Subset D$, then $u$ is
$\alpha$-subharmonic in $D$.
Thus, condition \eqref{puasson} gives a local characterization of
$\alpha$-subharmonic functions, since it depends only on the behaviour of the
function in a neighbourhood of the given point. In particular, this
characterization can be used to prove local properties of
$\alpha$-subharmonic functions. Additionally, in \cite{ABIS}, it has been shown that the condition \eqref{puasson} is equivalent to 
\[
    u(z) \leq \int\limits_{\partial B(z^0,r)}P_{\alpha}(z,\xi)u(\xi) d \sigma(\xi),  \quad \forall z \in B(z^0,r)
 \]   

Using equivalent definition of $\alpha$-subharmonic functions, now we show that the Hartogs' lemma, which plays an important role in potential theory, also holds for $\alpha$-subharmonic functions.

\begin{lemma} \label{Hartogs}
Let $\{u_j\}$ be a sequence of $\alpha$-subharmonic functions in a domain $D \subset \mathbb{C}^n$, which is locally uniformly bounded from above. Assume that for every $z \in D$,
\[
\limsup_{j\to\infty} u_j(z)\le g(z),
\]
where $g\in C(D)$. Then for every compact set $K\Subset D$ and every $\varepsilon>0$, there exists $j_0\in\mathbb N$ such that
\[
u_j(z)\le g(z)+\varepsilon
\qquad \text{for all } z\in K \text{ and all } j\ge j_0.
\]
\end{lemma}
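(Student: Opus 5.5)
We argue by contradiction and use the Poisson-type characterization \eqref{puasson} in place of the sub-mean value property of the classical proof. Suppose the conclusion fails for some compact $K\Subset D$ and some $\varepsilon>0$. Then there are a subsequence $j_k\to\infty$ and points $z_k\in K$ with
\[
u_{j_k}(z_k)>g(z_k)+\varepsilon .
\]
Passing to a further subsequence, we may assume $z_k\to z^0\in K$. Fix $r>0$ with $\overline{B(z^0,2r)}\Subset D$. Since $g$ is continuous at $z^0$, shrinking $r$ gives $|g(\xi)-g(z^0)|<\varepsilon/4$ on $\overline{B(z^0,2r)}$. By local uniform boundedness from above, we may subtract a constant and assume $u_j\le 0$ on $\overline{B(z^0,2r)}$; this only shifts $g$, so it is harmless.

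For $k$ large, $z_k\in B(z^0,r/2)$. By the equivalent form of \eqref{puasson}, applied on the ball $B=B(z^0,r)$,
\[
u_{j_k}(z_k)\le\int_{\partial B}P_\alpha(z_k,\xi)\,u_{j_k}(\xi)\,d\sigma(\xi).
\]
Because $u_{j_k}\le 0$ and $P_\alpha\ge0$, Fatou's lemma in reverse, applied to nonpositive functions, gives
\[
\limsup_k u_{j_k}(z_k)\le\int_{\partial B}\limsup_k\bigl[P_\alpha(z_k,\xi)u_{j_k}(\xi)\bigr]\,d\sigma .
\]
To control the right side we need $P_\alpha(z_k,\xi)\to P_\alpha(z^0,\xi)$ uniformly in $\xi\in\partial B$. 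This follows from the continuity of the kernel on $\overline{B(z^0,r/2)}\times\partial B$, a standard property of Poisson kernels of elliptic operators that the paper implicitly assumes. We also need $P_\alpha(z^0,\cdot)$ bounded, which holds for the same reason. With these, the integrand's $\limsup$ is at most $P_\alpha(z^0,\xi)\,g(\xi)$, using $\limsup_j u_j(\xi)\le g(\xi)$ and $u_j\le0$.

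\textbf{Main obstacle.} The resulting bound is $\int_{\partial B}P_\alpha(z^0,\xi)g(\xi)\,d\sigma\le g(z^0)+\varepsilon/4$, which uses the normalization $\int_{\partial B}P_\alpha(z^0,\xi)\,d\sigma=1$. That normalization holds because constants are $\alpha$-harmonic and $P_\alpha$ reproduces $\alpha$-harmonic functions, but the paper has not explicitly verified it for the characterization \eqref{puasson}. I expect this to be the step needing justification. I would first derive it by applying \eqref{puasson} to the $\alpha$-harmonic functions $\pm1$, since $\pm1$ are $\alpha$-subharmonic; if that fails, I would cite the Dirichlet-problem solvability for $\Delta_\alpha$ on balls from \cite{ABIS}.

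Combining the two displays gives
\[
g(z^0)+\tfrac{3\varepsilon}{4}\le\liminf_k\bigl(g(z_k)+\varepsilon\bigr)\le\limsup_k u_{j_k}(z_k)\le g(z^0)+\tfrac{\varepsilon}{4},
\]
a contradiction. Hence the required $j_0$ exists.

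A fallback avoids the moving-point issue. Apply \eqref{puasson} at centers $z_k$ with balls $B(z_k,r)$, and compare these with a fixed ball by translation. The uniformity of the kernel is the delicate point there as well, so I would use the version above.
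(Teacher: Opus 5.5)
Your argument is correct, but it obtains the uniformity in $z$ differently from the paper.

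The paper works locally around each point. It fixes $z^0\in K$ and applies \eqref{puasson} at the center together with Fatou's lemma. This gives $\int_{\partial B(z^0,\delta)}P_\alpha(z^0,\xi)u_j(\xi)\,d\sigma\le g(z^0)+\varepsilon/2$ for $j\ge j_0(z^0)$. It then transfers this bound to $z\in B(z^0,\delta_1)$ using uniform continuity of $P_\alpha$ on $\overline{B(z^0,\delta/2)}\times\partial B(z^0,\delta)$, and concludes with a finite subcover of $K$.

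You instead argue by contradiction, extract $z_k\to z^0$, and absorb the moving evaluation point into the Fatou step. You do this with the off-center form of \eqref{puasson} on a fixed ball.

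This makes your route genuinely more robust. The paper's transfer step must control $\int_{\partial B}\bigl(P_\alpha(z,\xi)-P_\alpha(z^0,\xi)\bigr)u_j(\xi)\,d\sigma$. The kernel difference has integral zero, so it changes sign. The term is therefore bounded by $\sup_\xi|P_\alpha(z,\xi)-P_\alpha(z^0,\xi)|$ times $\int|u_j|\,d\sigma$, not times $\int u_j\,d\sigma$ as written there. An upper bound on $u_j$ does not control $\int|u_j|\,d\sigma$. A repair would need something like $P_\alpha(z,\xi)\ge(1-\eta)P_\alpha(z^0,\xi)$ (strict positivity of the kernel) applied to $u_j-M\le 0$.

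Your version needs no lower control on $u_j$, and in fact less than you say. The integrands $P_\alpha(z_k,\xi)\bigl(u_{j_k}(\xi)-M\bigr)$ are nonpositive, so reverse Fatou with dominating function $0$ applies. You then only need the pointwise convergence $P_\alpha(z_k,\xi)\to P_\alpha(z^0,\xi)$. This is just continuity of the $\alpha$-harmonic function $z\mapsto P_\alpha(z,\xi)$; uniform convergence and boundedness of the kernel are not needed. Where $P_\alpha(z^0,\xi)=0$, the trivial bound $\limsup\le 0$ suffices.

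Your derivation of $\int_{\partial B}P_\alpha(z^0,\xi)\,d\sigma=1$, by applying \eqref{puasson} to the constants $\pm1$, is correct. It justifies a normalization the paper simply asserts.

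Once repaired, the paper's direct approach gives an explicit covering-based $j_0$. Yours is non-constructive, but shorter and it uses fewer properties of $P_\alpha$.
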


\begin{proof}
Fix a compact set $K\Subset D$ and let $z^0\in K$. Since $g$ is continuous on $D$, there exists $\delta>0$ such that
$\overline{B(z^0,\delta)}\Subset D$
and
\[
|g(z)-g(z^0)|<\frac{\varepsilon}{4},
\qquad \forall z\in \overline{B(z^0,\delta)}.
\]
Since the sequence $\{u_j\}$ is locally uniformly bounded from above, there exists a constant $M>0$ such that
\[
u_j(z)\le M
\qquad \text{for all } z\in \overline {B(z^0,\delta)}, \quad j\in\mathbb N.
\]
By the $\alpha$-subharmonicity of $u_j$ and the inequality \eqref{puasson}, we have
\[
u_j(z^0)\le \int_{\partial B(z^0,\delta)} P_\alpha(z^0,\xi)\,u_j(\xi)\,d\sigma(\xi),
\]
where $P_{\alpha}(z,\xi)$ is the Poisson kernel of the ball $B(z^0,\delta)$. Passing to the limit superior and using Fatou's lemma for the nonnegative functions
\[
P_\alpha(z^0,\xi)\bigl(M-u_j(\xi)\bigr),
\]
we obtain
\[
\limsup_{j\to\infty}\int_{\partial B(z^0,\delta)} P_\alpha(z^0,\xi)\,u_j(\xi)\,d\sigma(\xi)
\le
\int_{\partial B(z^0,\delta)} P_\alpha(z^0,\xi)\,\limsup_{j\to\infty}u_j(\xi)\,d\sigma(\xi).
\]
Hence,
\[
\limsup_{j\to\infty}u_j(z^0)
\le
\int_{\partial B(z^0,\delta)} P_\alpha(z^0,\xi)\,\limsup_{j\to\infty}u_j(\xi)\,d\sigma(\xi)
\le
\int_{\partial B(z^0,\delta)} P_\alpha(z^0,\xi)\,g(\xi)\,d\sigma(\xi).
\]
Since $|g(\xi)-g(z^0)|<\varepsilon/4$ on $\partial B(z^0,\delta)$ and
\[
\int_{\partial B(z^0,\delta)} P_\alpha(z^0,\xi)\,d\sigma(\xi)=1,
\]
it follows that
\[
\int_{\partial B(z^0,\delta)} P_\alpha(z^0,\xi)\,g(\xi)\,d\sigma(\xi)
\le g(z^0)+\frac{\varepsilon}{4}.
\]
Consequently, there exists $j_0(z^0)\in\mathbb N$ such that for all $j\ge j_0(z^0)$,
\[
\int_{\partial B(z^0,\delta)} P_\alpha(z^0,\xi)\,u_j(\xi)\,d\sigma(\xi)
\le g(z^0)+\frac{\varepsilon}{2}.
\]
Since $P_\alpha(z,\xi)$ is continuous on
\[
\overline{B(z^0,\delta/2)}\times \partial B(z^0,\delta),
\]
it is uniformly continuous there. Hence there exists $\delta_1\in (0,\delta/2)$ such that for all $z\in B(z^0,\delta_1)$ and all $\xi\in \partial B(z^0,\delta)$,
\[
|P_\alpha(z,\xi)-P_\alpha(z^0,\xi)|
<
\frac{\varepsilon}{4M\,\sigma_{2n}\,\delta^{2n-1}},
\]
where $\sigma_{2n}=\sigma(\partial B(0,1))$ denotes the surface measure of the unit sphere in $\mathbb C^n\simeq\mathbb R^{2n}$.
Hence, for all $z\in B(z^0,\delta_1)$ and all $j\ge j_0(z^0)$,
\[
u_j(z)
\le
\int_{\partial B(z^0,\delta)} P_\alpha(z,\xi)\,u_j(\xi)\,d\sigma(\xi)
\]
\[
\le
g(z^0)+\frac{\varepsilon}{2}
+
\frac{\varepsilon}{4M\,\sigma_{2n}\,\delta^{2n-1}}
\int_{\partial B(z^0,\delta)} u_j(\xi)\,d\sigma(\xi) \le
g(z^0)+\frac{3\varepsilon}{4}.
\]
Since $|g(z)-g(z^0)|<\varepsilon/4$ for $z\in B(z^0,\delta_1)$, it follows that
\[
u_j(z)\le g(z)+\varepsilon
\qquad \text{for all } z\in B(z^0,\delta_1), \ j\ge j_0(z^0).
\]
Now the family of balls $\{B(z,\delta_1(z))\}_{z\in K}$ forms an open cover of the compact set $K$. Hence there exist finitely many points
\[
z^{(1)},\dots,z^{(m)}\in K
\]
such that
\[
K\subset \bigcup_{\ell=1}^m B\bigl(z^{(\ell)},\delta_1(z^{(\ell)})\bigr).
\]
Then for every $j\ge j_0, j_0=\max_{1\le \ell\le m} j_0\bigl(z^{(\ell)}\bigr) $ and every $z\in K$, we have
\[
u_j(z)\le g(z)+\varepsilon.
\]
The proof is complete.
\end{proof}
 \begin{remark}
 The method used in the proof of Hartogs' lemma for the class of subharmonic functions does not apply to the proof of Hartogs' lemma in the class of $\alpha$-subharmonic functions. However, the proof given above provides a new proof of Hartogs' lemma even in the class of subharmonic functions.
 \end{remark}

\section{$\alpha$-subharmonic measure} \label{alpha subharmonic measure}

The $\alpha$-subharmonic measure is defined as an extremal function in the class of $\alpha$-subharmonic functions.
The $\alpha$-subharmonic measure is usually defined in $\alpha$-regular
domains, which may be regarded as a generalization of strongly pseudoconvex
domains.

\begin{definition}\label{SMRD}
A domain $D \subset \mathbb{C}^n$ is called \textit{$\alpha$-regular} if there exists a function $\rho \in \alpha\text{-}sh({D})$ such that $\rho|_D < 0,\ \lim\limits_{z \to \partial D} \rho(z) = 0$.
It is called \textit{strongly $\alpha$-regular} if  $\rho \in \alpha \text{-}sh(D^+) \cap C^2(D^+)$ and is strictly $\alpha$-subharmonic in $D^+$, where $D^+$ is a neighborhood of the closure $\bar{D}$.
\end{definition}

Let $D \subset \mathbb{C}^n$ be a bounded $\alpha$-regular domain and
 $E \subset D$ be any subset, and denote by $\mathcal{U}(E,D)$ the class of functions $u \in \alpha\text{-}sh(D)$ satisfying
\[
u|_E \leq -1, \qquad u < 0 \quad \text{on } D.
\]
Define the function
\[
\omega_{\alpha}(z,E,D) := \sup \{ u(z) : u \in \mathcal{U}(E,D) \}.
\]
It is well known that a function defined in this way is not necessarily upper semicontinuous. To ensure its upper semicontinuity, we consider its upper semicontinuous regularization.
\begin{definition}\label{def of asm}
The upper semicontinuous regularization of the function $\omega_{\alpha}(z,E,D)$, defined by
\[
\omega^*_{\alpha}(z,E,D) := \limsup_{w \to z} \, \omega_{\alpha}(w,E,D),
\]
is called \textit{the $\alpha$-subharmonic measure} (or $P_{\alpha}$-measure) of the set $E$ with respect to the domain $D$.
\end{definition}

By the  property \ref{family} of $\alpha$-subharmonic functions in Section~\ref{alfa subharmonic function}, it is obvious that the $\alpha$-subharmonic measure is $\alpha$-subharmonic in $D$ and  satisfies the inequality: 
$$-1 \leq \omega_{\alpha}^*(z,E,D) \leq 0, \quad \forall z \in D.$$ 
By the maximum principle for $\alpha$-subharmonic functions, the $\alpha$-subharmonic measure is either nowhere or everywhere zero.
In addition, if a function $u(z)$ is $\alpha$-subharmonic in $D$ and satisfies
\[
u|_E \leq r, \qquad u < R \quad \text{on } D,
\]
where $R > r$, then the function
\[
\frac{u(z)-R}{R-r}
\]
belongs to the class $\mathcal{U}(E,D)$. Hence,
\[
\frac{u(z)-R}{R-r} \leq \omega_{\alpha}^*(z,E,D),
\]
which is equivalent to
\begin{equation} \label{two constants}
u(z) \leq R\bigl(1+\omega_{\alpha}^*(z,E,D)\bigr) - r\,\omega_{\alpha}^*(z,E,D),
\end{equation}
for all $z \in D$. The inequality \eqref{two constants} is called the theorem on two constants.

In addition, the $P_{\alpha}$-measure satisfies the following simple
properties (see \cite{SR}).

\begin{proposition}\label{prop: properties of alpha-measure}
The following statements hold:
\begin{enumerate}
\item If $E_1 \subset E_2 \subset D$, then
\[
\omega_{\alpha}^*(z,E_1,D) \ge \omega_{\alpha}^*(z,E_2,D), \quad  \text{for all } z \in D.
\]
If $E \subset D_1 \subset D_2$, then
\[
\omega_{\alpha}^*(z,E,D_1) \ge \omega_{\alpha}^*(z,E,D_2) \quad \text{for all } z \in D_1.
\]

\item If $U$ is an open subset of $D$, then
\[
\omega_{\alpha}^*(z,U,D) = \omega_{\alpha}(z,U,D)
\quad \text{for all } z \in D.
\]

\item Let $U$ be an open set such that $U = \bigcup_{j=1}^{\infty} K_j$, where $K_j \subset K_{j+1}$ and each $K_j$ is compact. Then
\[
\omega_{\alpha}^*(z,K_j,D) \downarrow \omega_{\alpha}(z,U,D).
\]

\item If $E \subset D$, then there exists a decreasing sequence of open sets $\{U_j\}$ with $E \subset U_j$ such that
\[
\omega_{\alpha}^*(z,E,D) = \left(\lim_{j \to \infty} \omega_{\alpha}(z,U_j,D)\right)^*.
\]

\item Let $E = \bigcup_{j=1}^{\infty} E_j$, where $E_j \subset D$. Then, for all $z \in D$,
\[
\omega_{\alpha}^*(z,E,D) \ge \sum_{j=1}^{\infty} \omega_{\alpha}^*(z,E_j,D).
\]

\item The $\alpha$-subharmonic measure $\omega_{\alpha}^*(z,E,D)$ is equal to zero if and only if $E$ is $\alpha$-polar in $D$, that is, there exists
a function $\rho\in \alpha\text{-}sh(D)$ such that
$\rho|_E=-\infty$ and $\rho\not\equiv -\infty$.
\end{enumerate}
\end{proposition}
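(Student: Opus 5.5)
We prove the six items separately. Items (1) and (2) follow directly from the definitions. Items (3)--(6) use two tools.

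The first tool is the strong upper semicontinuity built into Definition~\ref{alpha-definition}: if an $\alpha$-subharmonic function satisfies $v\le c$ almost everywhere on an open set, then $v\le c$ everywhere on it.

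The second is the Choquet-type fact that $\omega^*_{\alpha}(\cdot,E,D)=\omega_{\alpha}(\cdot,E,D)$ almost everywhere. To see this, pick a countable subfamily $v_k\in\mathcal U(E,D)$, which may be taken increasing by property 3 of Section~\ref{alfa subharmonic function}, with $(\lim v_k)^*=\omega^*_{\alpha}$. An increasing limit of $\alpha$-subharmonic functions coincides with its regularization almost everywhere, by $L^1_{loc}$ convergence and the Poisson representation \eqref{puasson}.

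\textbf{Item (1).} If $E_1\subset E_2$, then $\mathcal U(E_2,D)\subset\mathcal U(E_1,D)$. If $D_1\subset D_2$, then restricting any $u\in\mathcal U(E,D_2)$ to $D_1$ gives an element of $\mathcal U(E,D_1)$. Taking suprema and then regularizing gives both inequalities.

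\textbf{Item (2).} Every $u\in\mathcal U(U,D)$ is $\le -1$ on $U$, so $\omega_{\alpha}\le-1$ on $U$. Because $U$ is open, the $\limsup$ defining $\omega^*_{\alpha}$ at a point of $U$ only involves points of $U$, so $\omega^*_{\alpha}\le -1$ on $U$. By property~\ref{family}, $\omega^*_\alpha$ is $\alpha$-subharmonic, and $\omega^*_{\alpha}\le 0$. By the maximum principle (property~\ref{maximum principle}), $\omega^*_{\alpha}$ cannot vanish anywhere, since it would then vanish identically, contradicting $\omega^*_\alpha\le -1$ on $U\neq\emptyset$. Hence $\omega^*_{\alpha}<0$, so $\omega^*_{\alpha}\in\mathcal U(U,D)$ and therefore $\omega^*_{\alpha}\le\omega_{\alpha}$. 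The reverse inequality is trivial.

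\textbf{Item (3).} By (1), the functions $\omega^*_{\alpha}(\cdot,K_j,D)$ decrease to a limit $v$ that is $\alpha$-subharmonic and satisfies $v\ge\omega_{\alpha}(\cdot,U,D)$. On $K_j$ we have $\omega_{\alpha}(\cdot,K_j,D)\le-1$, so by the Choquet fact $\omega^*_{\alpha}(\cdot,K_j,D)\le -1$ almost everywhere on $K_j$. Since the functions decrease and $U=\bigcup K_j$, we get $v\le-1$ almost everywhere on $U$. Strong upper semicontinuity then gives $v\le -1$ on all of $U$. Also $v<0$, by the maximum principle as in (2). Thus $v\in\mathcal U(U,D)$, which gives $v\le\omega_{\alpha}(\cdot,U,D)$ and hence equality.

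\textbf{Item (4).} Take the increasing sequence $v_k$ from the Choquet fact. Set $W_j=\{v_j<-1+1/j\}$; this set is open because $v_j$ is upper semicontinuous, and it contains $E$ because $v_j\le-1$ on $E$. Let $U_j=\bigcap_{i\le j}W_i$, a decreasing sequence of open sets containing $E$. On $U_j\subset W_j$ we have $v_j<-1+1/j$, so the function $v_j/(1-1/j)$ lies in $\mathcal U(U_j,D)$. This gives the chain
\[
\omega^*_\alpha(\cdot,E,D)\ \ge\ \omega_\alpha(\cdot,U_j,D)\ \ge\ v_j/(1-1/j),
\]
where the first inequality is (1) together with (2). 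Letting $j\to\infty$ and regularizing yields the claimed identity.

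\textbf{Item (5).} First assume every $E_j$ is open, so that $\omega=\omega^*$ by (2). Fix $z$ and $\varepsilon>0$, and choose $u_j\in\mathcal U(E_j,D)$ with $u_j(z)\ge\omega_{\alpha}(z,E_j,D)-\varepsilon2^{-j}$. The partial sums of $\sum_j u_j$ decrease, so the sum is $\alpha$-subharmonic; it is $\le-1$ on $E$, since each term is negative and the $j$-th term is $\le-1$ on $E_j$. If the series of measures is $-\infty$ at $z$ there is nothing to prove; otherwise $\sum_j u_j\in\mathcal U(E,D)$. This gives $\omega_{\alpha}(z,E,D)\ge\sum_j\omega_{\alpha}(z,E_j,D)-\varepsilon$. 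For general $E_j$, apply this to the open sets from (4), using $E\subset\bigcup_j U^{(j)}_k$, then pass to the limit in $k$ and regularize. Interchanging the sum with the limit and the regularization is the \emph{main obstacle}. I would handle it through the almost-everywhere equality $\omega=\omega^*$ combined with strong upper semicontinuity, since both sides are $\alpha$-subharmonic.

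\textbf{Item (6).} Suppose $E$ is $\alpha$-polar, witnessed by $\rho$. Since $D$ is bounded we may normalize so that $\rho<0$ on $D$. Then $\varepsilon\rho\in\mathcal U(E,D)$ for every $\varepsilon>0$, so $\omega_{\alpha}=0$ off the Lebesgue-null set $\{\rho=-\infty\}$. Strong upper semicontinuity then gives $\omega^*_{\alpha}\equiv0$.

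Conversely, suppose $\omega^*_{\alpha}(\cdot,E,D)=0$. By (4), $\lim_j\omega_{\alpha}(\cdot,U_j,D)=0$ almost everywhere. Pick such a point $z^0$ and indices $j_k$ with $\omega_{\alpha}(z^0,U_{j_k},D)>-2^{-k}$. Then
\[
\rho=\sum_k\omega_{\alpha}(\cdot,U_{j_k},D)
\]
is $\alpha$-subharmonic as a decreasing limit of partial sums, it equals $-\infty$ on $E$ because every term is $\le-1$ there, and $\rho(z^0)>-1$. Hence $E$ is $\alpha$-polar.
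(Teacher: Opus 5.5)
The paper gives no proof of this proposition; it quotes it from \cite{SR}. So your argument has to stand on its own. Items (1)--(4) are sound, modulo the a.e.\ lemma discussed at the end (in (4), start at $j\ge 2$, since $v_1/(1-1)$ is undefined). The direction ``$\omega^*_\alpha\equiv 0\Rightarrow E$ is $\alpha$-polar'' in (6) is also sound. Your strong-upper-semicontinuity argument for (3) is actually better than the paper's treatment of the weighted analogue, because it needs no hypothesis $K_j\subset K_{j+1}^{\circ}$. There is, however, a genuine gap in the other half of (6), and (5) is not finished.

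\textbf{Item (6), polar $\Rightarrow\omega^*_\alpha\equiv0$.} The step ``since $D$ is bounded we may normalize so that $\rho<0$'' is wrong. An $\alpha$-subharmonic function is only locally bounded above, and a witness may blow up at $\partial D$: for $\alpha=\beta^{n-1}$ on the unit ball, add to any witness the Poisson kernel with pole at a boundary point. Without $\rho<0$ you have $\varepsilon\rho\notin\mathcal U(E,D)$, and the argument stops.

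A fix using only what you already have goes as follows. Let $\rho_D$ be the defining function of $D$. By your (5) it suffices to treat $E'=E\cap\{\rho_D\le -c\}$ for each $c>0$. Fix $z_0$ with $\rho(z_0)>-\infty$, $0<\eta<1$, and $0<\delta<c\eta$ with $\rho_D(z_0)<-\delta$. Put $D'=\{\rho_D<-\delta\}\Subset D$ and $M=\sup_{D'}\rho<\infty$. Define $v=\max\{\varepsilon(\rho-M)-\eta,\ \rho_D/c\}$ on $D'$ and $v=\rho_D/c$ on $D\setminus D'$. At $\xi\in\partial D'$ you have $\rho_D(\xi)/c\ge-\delta/c>-\eta$, which dominates the $\limsup$ of the first term. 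Hence the usual gluing (as in the paper's proof of Proposition~\ref{extends}(4)) makes $v$ $\alpha$-subharmonic; it is negative and $\le -1$ on $E'$. Letting $\varepsilon\to0$ and then $\eta\to0$ gives $\omega_\alpha(z_0,E',D)=0$. The maximum principle then gives $\omega^*_\alpha(\cdot,E',D)\equiv 0$, and (5) yields $\omega^*_\alpha(\cdot,E,D)\equiv0$. Alternatively, invoke a local-to-global theorem for $\alpha$-polar sets (cf.\ \cite{ABSR}). The paper's own proof of Proposition~\ref{extends}(1) tacitly makes the same normalization.

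\textbf{Item (5).} Your detour through the open sets of (4) leads to exactly the interchange you flag, and it can genuinely fail. For nonpositive terms increasing in $k$, $\lim_k\sum_j$ can stay $-\infty$ while $\sum_j\lim_k$ is finite. The detour is unnecessary, because your first step never uses openness. It already gives $\omega_\alpha(z,E,D)\ge\sum_j\omega_\alpha(z,E_j,D)$ for every $z$ and arbitrary $E_j$. Off the null set $\bigcup_j\{\omega_\alpha(\cdot,E_j,D)<\omega^*_\alpha(\cdot,E_j,D)\}$, this reads $\omega^*_\alpha(\cdot,E,D)\ge S:=\sum_j\omega^*_\alpha(\cdot,E_j,D)$. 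Since $S$ is $\alpha$-subharmonic as a decreasing limit of partial sums, strong upper semicontinuity of $S$ together with upper semicontinuity of $\omega^*_\alpha(\cdot,E,D)$ upgrades the inequality to every point.

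\textbf{The a.e.\ lemma.} Your claim that an increasing limit $v=\lim v_k$ satisfies $v=v^*$ a.e.\ carries (3), (5) and (6), and the Poisson inequality alone does not give it. A clean route is this. Since $v_k\to v$ in $L^1_{loc}$, you get $dd^cv\wedge\alpha\ge 0$, so $v$ has an $\alpha$-subharmonic representative $\tilde v$; this must be quoted from the basic theory, e.g.\ \cite{ABIS}. Strong upper semicontinuity of $\tilde v$ gives $\tilde v\le v^*$. Also $v_k\le\tilde v$ a.e., hence everywhere, so $v^*\le\tilde v$. Therefore $v^*=\tilde v=v$ a.e.
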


Nevertheless, there remain numerous properties of the $\alpha$-subharmonic measure that have not yet been thoroughly investigated. The study of these properties continues to be of considerable interest in modern analysis. 
Below, we present a proposition describing several important properties of the $\alpha$-subharmonic measure related to sets that have not yet been studied.
\begin{proposition} \label{Pro sets}
The following properties hold for the $\alpha$-subharmonic measure:

\begin{enumerate}

\item Let $A\subset D$ be an $\alpha$-polar set in $D$. Then, for every
$E\subset D$, we have
\[
\omega_{\alpha}^*(z,E,D)=\omega_{\alpha}^*(z,E\cup A,D),
\]
for all $z\in D$.

\item If $E \subset \subset D$, then
\[
\lim_{z \to \partial D} \omega_{\alpha}^*(z,E,D) = 0.
\]

\item Let $\{K_j\}$ be a sequence of compact sets such that $K_{j+1} \subset K_j$ and $K = \bigcap_{j=1}^{\infty} K_j$. Then, for all $z \in D$,
\[
\lim_{j \to \infty} \omega_{\alpha}(z,K_j,D) = \omega_{\alpha}(z,K,D).
\]

\item Let $E \subset \subset D_1$ and let $\{D_j\}$ be an increasing sequence of domains such that \\ $D_j \subset D_{j+1}$ and $D = \bigcup_{j=1}^{\infty} D_j$. Then,
\[
\lim_{j \to \infty} \omega_{\alpha}^*(z,E,D_j) = \omega_{\alpha}^*(z,E,D).
\]

\end{enumerate}
\end{proposition}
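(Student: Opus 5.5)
We prove the four items of Proposition~\ref{Pro sets} separately. Each argument uses only the definition of $\omega_\alpha$, the elementary properties from Section~\ref{alfa subharmonic function}, Proposition~\ref{prop: properties of alpha-measure}, and the $\alpha$-regularity of $D$ through its exhaustion function $\rho$.

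For (1), monotonicity gives $\omega_\alpha^*(z,E\cup A,D)\le\omega_\alpha^*(z,E,D)$. For the reverse inequality, take the function $v\in\alpha\text{-}sh(D)$ with $v|_A=-\infty$ and $v\not\equiv-\infty$; after restricting to a slightly smaller domain and subtracting a constant we may assume $v<0$. For $u\in\mathcal U(E,D)$ and $\varepsilon>0$, the function $u+\varepsilon v$ lies in $\mathcal U(E\cup A,D)$. Hence
\[
u+\varepsilon v\le\omega_\alpha(z,E\cup A,D).
\]
Letting $\varepsilon\to0$ gives the inequality off the set $\{v=-\infty\}$, which has Lebesgue measure zero. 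Taking the supremum over $u$ and passing to upper regularizations, strong upper semicontinuity (equality of the $\limsup$ over sets of full measure) yields $\omega_\alpha^*(z,E,D)\le\omega_\alpha^*(z,E\cup A,D)$ everywhere. The one technical point is that $v$ is only defined locally or fails to be negative; in that case we cover with finitely many pieces, or apply Proposition~\ref{prop: properties of alpha-measure}(5) together with (6).

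For (2), choose $r$ with $E\subset\{\rho<-r\}$; such an $r$ exists because $\overline E$ is compact in $D$ and $\rho$ is upper semicontinuous and negative. Then $\rho/r\in\mathcal U(E,D)$, so
\[
0\ge\omega_\alpha^*(z,E,D)\ge\rho(z)/r\to0 \quad\text{as } z\to\partial D.
\]

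For (3), monotonicity gives that $\omega_\alpha(z,K_j,D)$ increases in $j$ and is bounded above by $\omega_\alpha(z,K,D)$. For the reverse inequality, fix $u\in\mathcal U(K,D)$ and $\varepsilon>0$, and replace $u$ by $(1-\varepsilon)u$, so that $u\le-1+\varepsilon$ on $K$. The set $\{u<-1+2\varepsilon\}$ is open by upper semicontinuity and contains $K$, so it contains $K_j$ for $j$ large. Then
\[
\frac{u}{1-2\varepsilon}\in\mathcal U(K_j,D),
\]
and we let $\varepsilon\to0$. I expect this step to be the main subtlety: shrinking $u$ by $(1-\varepsilon)$ is what makes the inequality strict, so that the open neighbourhood argument applies; with $u\le-1$ alone that neighbourhood would not be available.

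For (4), monotonicity in the domain gives that $\omega_\alpha^*(z,E,D_j)$ decreases in $j$ and dominates $\omega_\alpha^*(z,E,D)$. Let $v$ denote the limit. For a decreasing sequence of $\alpha$-subharmonic functions, $v$ is $\alpha$-subharmonic in $D$ by property (2) of Section~\ref{alfa subharmonic function}. It satisfies $v\le0$, and $v\le-1$ on $E$ away from a set where the regularization differs. The main obstacle is twofold: showing that $v<0$, and handling the fact that $\omega^*\le-1$ holds only outside an $\alpha$-polar set. I would handle the first by the maximum principle: either $v\equiv0$, or $v<0$. The second I would handle by using (1) together with Proposition~\ref{prop: properties of alpha-measure}(4), approximating $E$ by open sets $U$, for which $\omega_\alpha^*=\omega_\alpha\le-1$ holds on $U$ exactly. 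This gives $v\le\omega_\alpha(z,U,D)$, and taking limits over $U$ and regularizing yields the equality.
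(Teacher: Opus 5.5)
\textbf{Gap in (4).} Items (2) and (3) are correct and follow the paper's arguments. In (3) the factor $(1-\varepsilon)$ is unnecessary. Since $u\le -1<-1+\varepsilon$ on $K$, the set $\{u<-1+\varepsilon\}$ is already an open neighbourhood of $K$, and the paper simply uses $u-\varepsilon\in\mathcal{U}(K_j,D)$. For (1), make your fallback the actual proof: Proposition~\ref{prop: properties of alpha-measure}(5),(6) give directly
$\omega_\alpha^*(z,E\cup A,D)\ge\omega_\alpha^*(z,E,D)+\omega_\alpha^*(z,A,D)=\omega_\alpha^*(z,E,D)$.
Your main route (which is also the paper's) needs $v$ bounded above on all of $D$; the paper tacitly assumes $v<0$. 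Passing to a smaller domain does not produce a competitor on $D$.

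In (4) you try to show that $v=\lim_j\omega_\alpha^*(\cdot,E,D_j)$ is itself a competitor in $\mathcal{U}(E,D)$, and neither obstacle you name is actually overcome.
\begin{itemize}
\item The maximum principle only gives the dichotomy $v\equiv 0$ or $v<0$. The case $v\equiv 0$ occurs exactly when $E$ is $\alpha$-polar in every $D_j$. There you must still prove $\omega_\alpha^*(\cdot,E,D)\equiv 0$, i.e.\ that $E$ is $\alpha$-polar in $D$. That is a local-to-global statement you have no tool for.
\item $v\le -1$ holds on $E$ only off the sets where $\omega_\alpha(\cdot,E,D_j)<\omega_\alpha^*(\cdot,E,D_j)$. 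Nothing available says these sets are $\alpha$-polar in $D$, so item (1) cannot absorb them.
\item The proposed repair via open sets runs the wrong way. For open $U\supset E$ you have $\omega_\alpha^*(z,E,D_j)\ge\omega_\alpha(z,U,D_j)$, hence $v\ge\lim_j\omega_\alpha(z,U,D_j)$, so no bound $v\le\omega_\alpha(z,U,D)$ follows. The open-set case controls $\lim_j\omega_\alpha(z,U,D_j)$, not $v$. Exchanging the limit in $j$ with the limit over a decreasing family $U_k\supset E$ only gives the inequality in the direction you do not need.
\end{itemize}

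\textbf{The missing idea.} Transfer competitors one at a time from $D_j$ to $D$ using the exhaustion function $\rho$, as the paper does.
\begin{enumerate}
\item Put $C=-1/\max_{\overline E}\rho>0$. Choose $\varepsilon_j\downarrow 0$ so that $\{\rho\le-\varepsilon_j\}$ is relatively compact in $D_j$.
\item For $u\in\mathcal{U}(E,D_j)$, set $w=\max\{u-C\varepsilon_j,\,C\rho\}$ in $D_j$ and $w=C\rho$ in $D\setminus D_j$.
\item Where $\rho>-\varepsilon_j$ one has $u-C\varepsilon_j<-C\varepsilon_j<C\rho$, so $w=C\rho$ near $\partial D_j\cap D$. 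Thus $w\in\alpha\text{-}sh(D)$, $w<0$, and $w\le -1$ on $E$.
\item Hence $u-C\varepsilon_j\le\omega_\alpha(z,E,D)$ on $D_j$. Taking the supremum over $u$ and regularizing gives $\omega_\alpha^*(z,E,D_j)-C\varepsilon_j\le\omega_\alpha^*(z,E,D)$.
\item Letting $j\to\infty$ gives the missing inequality.
\end{enumerate}
This argument never needs $v<0$ and involves no bookkeeping of exceptional sets.
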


The proof of Proposition~\ref{Pro sets} will be given in Section 4 for the weighted $\alpha$-subharmonic measure, which is a generalization of the $\alpha$-subharmonic measure (see Proposition \ref{extends} in Section 4).

Assume that $K \subset D$ is a compact set.

\begin{definition} \label{reg}
A point $z_0\in K$ is called an \textit{$\alpha$-regular point of $K$ with respect to $D$} if
\[
\omega_{\alpha}^*(z_0,K,D)=-1
\]
It is called a \textit{locally $\alpha$-regular point of $K$ with respect to $D$} if, for every
neighborhood $B\subset \mathbb{C}^n$ of $z_0$, one has
\[
\omega_{\alpha}^*(z_0,K\cap \overline{B},D)=-1.
\]
If every point of $K$ is $\alpha$-regular, then $K$ is called an
\textit{$\alpha$-regular compact set with respect to $D$}. If every point of $K$ is
locally $\alpha$-regular, then $K$ is called a
\textit{locally $\alpha$-regular compact set with respect to $D$}.
\end{definition}

Throughout this paper, the domain $D$ is assumed to be fixed. Hence, for simplicity,
we refer to compact sets that are $\alpha$-regular with respect to $D$
(respectively, locally $\alpha$-regular with respect to $D$) simply as
$\alpha$-regular (respectively, locally $\alpha$-regular) compact sets.

By Definition~\ref{reg}, every locally $\alpha$-regular point of a compact set
$K$ is $\alpha$-regular. Consequently, every locally $\alpha$-regular compact
set is $\alpha$-regular.

However, the converse is not true in general, as the following simple example shows.

\begin{example}
Let $D=B(0,2)$ be a ball in $\mathbb{C}^2$, let
$K=\{z\in \mathbb{C}^{2}: |z|=1\}\cup\{0\}$, and let
\(\alpha=dd^{c}|z|^{2}\). It is well known that \(K\) is \(\alpha\)-regular
(see, for example, \cite{AS}), i.e.,
\[
\omega_{\alpha}^*(z,K,D)= -1,
\quad \forall z\in K.
\]
On the other hand, the point \(0\) is not locally \(\alpha\)-regular. Indeed,
for every ball \(B=B(0,r)\) with \(0<r<1\), we have
$K\cap \overline{B}=\{0\}$,
and hence
\[
\omega_{\alpha}^*(z,K\cap \overline{B},D)
\equiv 0
\qquad \text{in } D.
\]
\end{example}

We present the following lemma describing the relationship between $\alpha$-regularity and pluriregularity.

\begin{lemma}\label{pluriregular}
Every $\alpha$-regular (respectively, locally $\alpha$-regular) compact set is pluriregular (respectively, locally pluriregular).
\end{lemma}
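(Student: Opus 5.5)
The plan is to compare the $\alpha$-subharmonic measure with the plurisubharmonic measure
\[
\omega^*(z,K,D)=\Bigl(\sup\{u(z): u\in psh(D),\ u|_K\le -1,\ u<0\}\Bigr)^*.
\]
Pluriregularity of $K$ means $\omega^*(z_0,K,D)=-1$ at every point $z_0\in K$. Local pluriregularity means $\omega^*(z_0,K\cap\overline{B},D)=-1$ for every neighborhood $B$ of $z_0$. The whole argument rests on one inequality between the two extremal functions.

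\textbf{Step 1: the comparison.} Since $psh(D)\subset\alpha\text{-}sh(D)$, as recalled in Section~\ref{alfa subharmonic function}, every function in the plurisubharmonic competitor class for a set $E$ already belongs to $\mathcal{U}(E,D)$. Taking suprema gives
\[
\sup\{u: u\in psh(D),\ u|_E\le -1,\ u<0\}\le \omega_\alpha(z,E,D),
\]
and passing to upper regularizations yields
\[
-1\le \omega^*(z,E,D)\le \omega_\alpha^*(z,E,D)\qquad \text{for all } z\in D.
\]

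\textbf{Step 2: the global statement.} Apply Step 1 with $E=K$. If $K$ is $\alpha$-regular, then for every $z_0\in K$ we get $-1\le\omega^*(z_0,K,D)\le\omega_\alpha^*(z_0,K,D)=-1$. Hence $\omega^*(z_0,K,D)=-1$, and $K$ is pluriregular.

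\textbf{Step 3: the local statement.} Apply the same inequality with $E=K\cap\overline{B}$ for every neighborhood $B$ of $z_0$. This shows that local $\alpha$-regularity of $z_0$ forces $\omega^*(z_0,K\cap\overline{B},D)=-1$ for every such $B$. Thus every locally $\alpha$-regular point is locally pluriregular, and a locally $\alpha$-regular compact set is locally pluriregular.

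\textbf{Main obstacle.} The only delicate point is that the pluricomplex extremal function must be normalized in the same way as $\omega_\alpha$. That is, it should be defined over the same bounded domain $D$ with the constraints $u<0$ and $u|_E\le -1$. One also has to note that in the convention above the regularity notion is ``$=-1$'' rather than ``$=0$''. If the reader prefers the Siciak-type definition of pluriregularity via the global extremal function $V_K$, I would invoke the classical equivalence, for compact $K$ in a bounded hyperconvex domain, between the continuity (respectively the vanishing) of $V_K$ and of the relative extremal function $\omega^*(\cdot,K,D)$. Such a $D$ is available because $D$ is $\alpha$-regular, and the argument then reduces to the one above. I expect no other step to be substantive.
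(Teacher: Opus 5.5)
Your argument is correct and is essentially the paper's proof: from $psh(D)\subset\alpha\text{-}sh(D)$ you get $-1\le\omega^*(z,E,D)\le\omega_\alpha^*(z,E,D)$ and apply it with $E=K$ and $E=K\cap\overline{B}$. You also state explicitly the lower bound $-1$, which the paper uses only tacitly.

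One small caveat concerns your closing aside only: an $\alpha$-regular domain need not be hyperconvex, since its exhaustion is only $\alpha$-subharmonic (e.g.\ a spherical shell in $\mathbb{C}^2$ with $\alpha=\beta^{n-1}$). So for the Siciak-type formulation you should instead pass to a ball $B'\supset\overline{D}$ and use $\omega^*(z,K,B')\le\omega^*(z,K,D)$.
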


\begin{proof}
Let $K \subset D$ be an $\alpha$-regular compact set. Since every plurisubharmonic function is $\alpha$-subharmonic, it follows that the plurisubharmonic measure of $K$ does not exceed its $\alpha$-subharmonic measure, i.e.,
\[
\omega^*(z, K, D) \le \omega_{\alpha}^*(z, K, D), \qquad \forall z \in D,
\]
where $\omega^*(z, K, D)$ denotes the plurisubharmonic measure of the compact set $K$ with respect to the domain $D$ (see, for example, \cite{AS}, \cite{SAA}).
Since $K$ is $\alpha$-regular, we have 
\[\omega_{\alpha}^*(z^0, K, D) = -1\]
for every $z^0 \in K$. Hence,
\[
\omega^*(z^0, K, D) = -1, \qquad \forall z^0 \in K.
\]
Therefore, $K$ is pluriregular.
The proof of the statement concerning local regularity follows by the same argument.
The proof is complete.
\end{proof}

We have the following proposition describing the relationship between
$\alpha$-regularity and continuity of the $\alpha$-subharmonic measure.

\begin{proposition}\label{continuity}
The function $\omega_{\alpha}(z,K,D)$ is continuous in $D$ if and only if
$K$ is an $\alpha$-regular compact set.
\end{proposition}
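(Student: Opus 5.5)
The argument has two directions, with the reverse direction (regularity implies continuity) carrying the substance.

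\emph{Continuity implies regularity.} Assume $\omega_{\alpha}(z,K,D)$ is continuous in $D$. Then it coincides with its upper regularization, $\omega_{\alpha}=\omega_{\alpha}^*$. The function $\omega_\alpha^*$ is $\alpha$-subharmonic, negative, and bounded below by $-1$, and $\omega_\alpha\le -1$ on $K$ by definition. Hence $\omega_{\alpha}^*\in\mathcal U(K,D)$ and $\omega_\alpha^*(z_0,K,D)=-1$ for every $z_0\in K$, so $K$ is $\alpha$-regular.

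\emph{Regularity implies continuity: lower semicontinuity.} Assume $\omega_{\alpha}^*=-1$ on $K$. Then $\omega_\alpha^*\in\mathcal U(K,D)$, so $\omega_\alpha=\omega_\alpha^*$ and $\omega_\alpha$ is upper semicontinuous. It remains to show lower semicontinuity. I would realize $\omega_\alpha$ as an increasing limit of continuous $\alpha$-subharmonic functions, or more directly prove that for every $\varepsilon>0$ there is a continuous $v\in\mathcal U(K,D)$ with $v\ge\omega_\alpha-\varepsilon$ on $D$. Continuity of $\omega_\alpha$ then follows, because a supremum of continuous functions is lower semicontinuous.

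\emph{Construction of the approximant $v$.} Cover $K$ and the boundary by a compact exhaustion. Take $G\Subset D$ with $K\subset G$. Near $\partial D$ one uses the exhaustion function $\rho$ of the $\alpha$-regular domain: by the two-constants inequality \eqref{two constants} and Proposition~\ref{Pro sets}(2), $\omega_\alpha\ge M\rho$ for a suitable constant $M$, so $\omega_\alpha\to0$ at $\partial D$. Inside $G$ one regularizes $\omega_\alpha$. Since $\alpha$ has variable coefficients, plain convolution does not preserve $\alpha$-subharmonicity, so I would regularize with the Poisson integral instead. Replacing $u$ on small balls by $\int P_\alpha(z,\xi)u(\xi)\,d\sigma(\xi)$ gives larger, locally continuous $\alpha$-subharmonic functions (a Perron/Poisson modification). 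Alternatively, one obtains a decreasing sequence of continuous $\alpha$-subharmonic functions $u_j\downarrow\omega_\alpha$ on a neighborhood of $\overline G$, for instance from the representation $u=\int K(z,w)\,d\mu(w)+g$ by truncating the kernel, $\min(K,j)$.

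\emph{Key step via Hartogs.} Apply Lemma~\ref{Hartogs} with $g\equiv-1$ near $K$. Since $\omega_\alpha$ is continuous relative to $K$ with value $-1$ and is upper semicontinuous, Hartogs (or Dini, which suffices for a decreasing sequence) shows that $u_j\le -1+\varepsilon$ on a neighborhood $U$ of $K$ for large $j$. Moreover $u_j\le \omega_\alpha+\varepsilon$ on compacta off $K$ is not needed; we only need a glued function. Set
\[
v=\max\bigl(u_j-2\varepsilon,\;(1+\varepsilon)^{-1}\ldots\bigr)
\]
More concretely, set $v=(1+2\varepsilon)^{-1}\max\bigl(u_j-\varepsilon,\ M'\rho\bigr)$, arranged so that $v=M'\rho$ near $\partial D$. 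Then $v$ is continuous and $\alpha$-subharmonic, satisfies $v\le -1$ on $K$ and $v<0$, so $v\le\omega_\alpha$. Also $v\ge (1+2\varepsilon)^{-1}(\omega_\alpha-\varepsilon)$ on $G$. Letting $\varepsilon\to0$ gives $\omega_\alpha=\sup$ of continuous functions, hence $\omega_\alpha$ is lower semicontinuous.

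\emph{Main obstacle.} The hardest step is producing continuous $\alpha$-subharmonic approximants from above, since for non-constant $\alpha$ mollification does not work. I would first try truncating the fundamental-solution potential, then fall back on Poisson-kernel balayage via \eqref{puasson} together with Lemma~\ref{Hartogs}.
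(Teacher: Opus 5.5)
Your strategy is the paper's: the statement is the case $\psi\equiv-1$ of the proof of Theorem~\ref{weighet continuity}. Your forward direction is identical to the paper's. The converse also follows the paper: pass from $\omega_\alpha$ to $\omega_\alpha^*$, approximate by continuous $\alpha$-subharmonic $u_j\downarrow\omega_\alpha^*$, get $u_j\le-1+\varepsilon$ on $K$ by Dini or Lemma~\ref{Hartogs}, glue $u_j-\varepsilon$ into an element of $\mathcal U(K,D)$, and take a $\liminf$. The paper does not construct the approximants; it takes smooth nonpositive ones from \cite{ABIS}. Of your two suggestions, only the truncated potential can produce continuous approximants. A Poisson modification on a ball is in general discontinuous across the sphere, because $\omega_\alpha$ is not yet known to be continuous.

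\textbf{The gluing step fails as written.} There are three concrete problems.
\begin{enumerate}
\item The factor $(1+2\varepsilon)^{-1}$ goes the wrong way. On $K$ it only gives $v\le-(1+2\varepsilon)^{-1}$, which is $>-1$, so $v\notin\mathcal U(K,D)$ and $v\le\omega_\alpha$ is unjustified. The factor is unnecessary: $\max(u_j-\varepsilon,M'\rho)\le-1$ on $K$ already, once $M'\rho\le-1$ there.
\item With $G$ fixed before $\varepsilon$, you cannot glue for small $\varepsilon$. Gluing requires $u_j-\varepsilon\le M'\rho$ on $\partial G$. Since $u_j\ge\omega_\alpha$, this forces $\omega_\alpha\le M'\rho+\varepsilon$ on a fixed $\partial G$, which fails in general (check the disc case with $K$ a smaller closed disc). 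The gluing region must depend on $\varepsilon$, for instance $G_\varepsilon=\{M'\rho<-\varepsilon\}$. This set satisfies $K\subset G_\varepsilon\Subset D$, and you can take $u_j\le0$ near $\overline{G_\varepsilon}$ for large $j$. Then $u_j-\varepsilon\le-\varepsilon\le M'\rho$ on $\partial G_\varepsilon$. Moreover $G_\varepsilon\uparrow D$ as $\varepsilon\to0$.
\item $v$ need not be continuous. For a merely $\alpha$-regular domain, $\rho$ is only $\alpha$-subharmonic, not continuous, so ``a supremum of continuous functions'' is not available.
\end{enumerate}

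\textbf{The fix.} Use only the continuous minorant. From $v\in\mathcal U(K,D)$ you get $u_j-\varepsilon\le v\le\omega_\alpha$ on $G_\varepsilon$. Hence $\liminf_{z\to z^0}\omega_\alpha(z)\ge u_j(z^0)-\varepsilon\ge\omega_\alpha(z^0)-\varepsilon$ for every $z^0\in G_\varepsilon$. This is exactly the paper's argument. The paper also avoids $\rho$ entirely by using $\omega_\alpha^*$ itself as the outer function on $G_\varepsilon=\{\omega_\alpha^*<-\varepsilon\}$, where the boundary inequality $\omega_\alpha^*\ge-\varepsilon\ge u_j-\varepsilon$ is automatic.
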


We will establish this proposition in a more general form for the weighted
$\alpha$-subharmonic measure in Section~5, from which the above result follows
as an immediate consequence.

 \section{The weighted $\alpha\text{-}$subharmonic measure}\label{weighted alpha measure}
 In this section, we introduce the notion of the weighted $\alpha$-subharmonic measure, a generalization of the $\alpha$-subharmonic measure, and establish several of its important properties.
 
 Let us first recall some necessary definitions.
Let $D \subset \mathbb{C}^n$ be a bounded $\alpha$-regular domain, let $E \subset D$ be a fixed subset, and let $\psi$ be a bounded function on $E$ such that
\[
\sup_{z \in E} \psi(z) < 0.
\]
We denote by $\mathcal{U}(E,D,\psi)$ the class of all functions $u \in \alpha\text{-}sh(D)$ satisfying
\[
u|_E \leq \psi_E, \qquad u < 0 \quad \text{on } D.
\]
We define the extremal function
\[
\omega_\alpha(z,E,D,\psi)
=
\sup \{ u(z) : u \in \mathcal{U}(E,D,\psi) \}.
\]

\begin{definition} \label{weighted measure}
The upper semicontinuous regularization of $\omega_\alpha(z,E,D,\psi)$, defined by
\[
\omega_\alpha^*(z,E,D,\psi)
=
\limsup_{w \to z} \omega_\alpha(w,E,D,\psi),
\]
is called \textit{the $(\alpha,\psi)$-subharmonic measure} of the set $E$ with respect to the domain $D$.
\end{definition}

By Definition~\ref{weighted measure} and property~\ref{family} of $\alpha$-subharmonic functions in the Section~\ref{alfa subharmonic function}, the function $\omega_\alpha^*(z,E,D,\psi)$ is $\alpha$-subharmonic in $D$ and satisfies
\[
\inf_E \psi(w) \leq \omega_\alpha^*(z,E,D,\psi) \leq 0,
\qquad \forall z \in D.
\]
Note that if $\psi \equiv -1$, then the $(\alpha,\psi)$-subharmonic measure coincides with the $\alpha$-subharmonic measure, i.e.,
\[
\omega_\alpha^*(z,E,D,-1) \equiv \omega_\alpha^*(z,E,D).
\]

Moreover, by the maximum principle for $\alpha$-subharmonic functions, the $\alpha$-subharmonic measure is either identically zero or nowhere zero in $D$.

Furthermore, if
\[
\inf_{z \in E} \psi(z) \geq 0,
\]
then, by the maximum principle, the role of the weight function $\psi$ in the definition of the weighted $\alpha$-subharmonic measure disappears, and
\[
\omega_\alpha^*(z,E,D,\psi) \equiv 0
\]
in $D$. Therefore, throughout the paper, when defining
$\omega_\alpha^*(z,E,D,\psi)$, we assume that $\psi$ is bounded on $E \subset D$ and satisfies
\[
\sup_{z \in E} \psi(z) < 0.
\]

The following lemmas are immediate consequences of the definition of the $(\alpha,\psi)$-subharmonic measure.

\begin{lemma} \label{prop: monotonicity}
The following monotonicity properties hold:
\begin{enumerate}
\item If $E_1 \subset E_2 \subset D_1 \subset D_2$, and $\psi$ is defined on $E_2$, then
\[
\omega_\alpha^*(z,E_2,D_2,\psi)
\leq
\omega_\alpha^*(z,E_1,D_2,\psi)
\leq
\omega_\alpha^*(z,E_1,D_1,\psi),
\qquad \forall z \in D_1.
\]

\item If $\psi_1|_E \leq \psi_2|_E$, then
\[
\omega_\alpha^*(z,E,D,\psi_1)
\leq
\omega_\alpha^*(z,E,D,\psi_2),
\qquad \forall z \in D.
\]
\end{enumerate}
\end{lemma}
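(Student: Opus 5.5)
The plan is to compare the two defining classes of functions directly and then pass to the upper semicontinuous regularization. Both statements rest on one observation: if one class $\mathcal{U}(\cdot,\cdot,\cdot)$ is contained in another, then the pointwise suprema, and hence their regularizations, compare in the same order.

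For part (1), I would first show that $\omega_\alpha^*(z,E_2,D_2,\psi)\le \omega_\alpha^*(z,E_1,D_2,\psi)$ on $D_2$. Take $u\in\mathcal{U}(E_2,D_2,\psi)$. Then $u$ is $\alpha$-subharmonic in $D_2$, $u<0$ on $D_2$, and $u\le\psi$ on $E_2\supset E_1$. So $u\in\mathcal{U}(E_1,D_2,\psi)$ (with $\psi$ restricted to $E_1$). Taking suprema gives $\omega_\alpha(z,E_2,D_2,\psi)\le\omega_\alpha(z,E_1,D_2,\psi)$ for all $z\in D_2$. Taking $\limsup_{w\to z}$ on both sides preserves the inequality, so it holds in particular on $D_1$.

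For the second inequality, take $u\in\mathcal{U}(E_1,D_2,\psi)$ and restrict it to $D_1$. Three facts need checking.
\begin{itemize}
\item Strong upper semicontinuity is a local property, so it survives restriction.
\item Positivity of the current $dd^cu\wedge\alpha$ is tested against positive test functions supported in $D_1\subset D_2$, so it also survives.
\item We still have $u<0$ on $D_1$ and $u\le\psi$ on $E_1\subset D_1$.
\end{itemize}
Hence $u|_{D_1}\in\mathcal{U}(E_1,D_1,\psi)$, which gives $\omega_\alpha(z,E_1,D_2,\psi)\le\omega_\alpha(z,E_1,D_1,\psi)$ for $z\in D_1$. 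Since $D_1$ is open, the $\limsup_{w\to z}$ at $z\in D_1$ involves only points of $D_1$, so the inequality passes to the regularizations.

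For part (2), if $u\in\mathcal{U}(E,D,\psi_1)$, then $u|_E\le\psi_1|_E\le\psi_2|_E$. So $u\in\mathcal{U}(E,D,\psi_2)$, the suprema compare, and the regularizations compare as before.

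The only point needing any care, and it is minor, is the restriction step. One must confirm that $\alpha$-subharmonicity restricts to subdomains, where the form $\alpha$ on $D_2$ is restricted to $D_1$. One must also confirm that the standing assumptions needed to define the measures hold for $D_1$: it should be a bounded $\alpha$-regular domain, or at least the definition should make sense there. Everything else is formal.
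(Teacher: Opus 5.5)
Your proof is correct and is the argument the paper intends; the paper gives no written proof and just calls the lemma an immediate consequence of the definition. That argument is the class inclusions $\mathcal{U}(E_2,D_2,\psi)\subset\mathcal{U}(E_1,D_2,\psi)$, $\{u|_{D_1}: u\in\mathcal{U}(E_1,D_2,\psi)\}\subset\mathcal{U}(E_1,D_1,\psi)$ and $\mathcal{U}(E,D,\psi_1)\subset\mathcal{U}(E,D,\psi_2)$, followed by taking suprema and upper regularizations — your remarks on restricting to $D_1$ and on using the openness of $D_1$ in the regularization step just fill in details the paper leaves unstated.
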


\begin{lemma}[Theorem on two constants]\label{prop: two constant theorem for weighted}
Let $u \in \alpha\text{-}sh(D)$. Suppose that
\[
u < M \quad \text{on } D, 
\qquad 
u|_E \leq m,
\]
where $m < M$ and $E \subset D$. Then
\[
u(z) \leq
M \left(1-\frac{\omega_\alpha^*(z,E,D,\psi)}{\inf\limits_{w \in E}\psi(w)}\right)
+
m \frac{\omega_\alpha^*(z,E,D,\psi)}{\inf\limits_{w \in E}\psi(w)},
\qquad \forall z \in D.
\]
\end{lemma}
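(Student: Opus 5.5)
The plan is to reduce the statement to the definition of $\omega_\alpha(z,E,D,\psi)$: an affine change of $u$ with positive slope will produce a competitor in $\mathcal{U}(E,D,\psi)$. This mirrors the derivation of \eqref{two constants} for the unweighted measure. Throughout, set
\[
a:=\inf_{w\in E}\psi(w).
\]
By the standing assumption that $\psi$ is bounded with $\sup_E\psi<0$, we have $-\infty<a<0$. Then $\psi\ge a$ on $E$, so any function that is $\le a$ on $E$ is automatically $\le\psi$ on $E$.

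Define
\[
v(z):=(-a)\,\frac{u(z)-M}{M-m},\qquad z\in D .
\]
Since $-a>0$ and $M-m>0$, $v$ is a positive multiple of $u$ plus a constant, so $v\in\alpha\text{-}sh(D)$ by property (1) of $\alpha$-subharmonic functions in Section~\ref{alfa subharmonic function}. Constants are $\alpha$-harmonic, and the case $u\equiv-\infty$ is trivial.

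Next I check that $v$ is a competitor.
\begin{enumerate}
\item On $D$ we have $u<M$, hence $v<0$.
\item On $E$ we have $u-M\le m-M$, so $\frac{u-M}{M-m}\le -1$. Multiplying by $-a>0$ gives $v\le a\le\psi$ on $E$.
\end{enumerate}
Thus $v\in\mathcal{U}(E,D,\psi)$, and therefore
\[
v(z)\le\omega_\alpha(z,E,D,\psi)\le\omega_\alpha^*(z,E,D,\psi)\qquad\text{for all } z\in D .
\]

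Finally, I unwind this inequality. Multiplying by $\frac{M-m}{-a}>0$ gives
\[
u(z)-M\le -\frac{M-m}{a}\,\omega_\alpha^*(z,E,D,\psi),
\]
that is,
\[
u(z)\le M\Bigl(1-\frac{\omega_\alpha^*(z,E,D,\psi)}{a}\Bigr)+m\,\frac{\omega_\alpha^*(z,E,D,\psi)}{a},
\]
which is the claimed estimate.

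There is no real obstacle here. The only point needing care is the sign bookkeeping, since $a<0$ and both $\omega_\alpha^*$ and the scaling factor interact with it. One must use $\psi\ge a$ (not $\psi\le\sup\psi$) to get $v\le\psi$ on $E$, and divide only by positive quantities so the inequalities are not reversed.
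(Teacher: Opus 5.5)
Your proof is correct and is essentially the paper's argument: your competitor $v=(-a)\frac{u-M}{M-m}$ is the same function as the paper's $\frac{u-M}{m-M}\inf_{w\in E}\psi(w)$. The paper only asserts that this function lies in $\mathcal{U}(E,D,\psi)$; you also verify that membership and carry out the rearrangement of $v\le\omega_\alpha^*$, which the paper leaves implicit.
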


The proof of Lemma~\ref{prop: two constant theorem for weighted} follows from the fact that
\[
\frac{u-M}{m-M}\,\inf_{w \in E}\psi(w)
\in \mathcal{U}(E,D,\psi).
\]

Now we present a relation that expresses the connection between the weighted and unweighted $\alpha$-subharmonic measures.

\begin{lemma} \label{connection inequality}
For any set $E \subset D$, the following inequalities hold for all $z \in D$
\[
-\inf_{w \in E}\psi(w)\,\omega^*_{\alpha}(z,E,D)
\leq
\omega_\alpha^*(z,E,D,\psi)
\leq
-\sup_{w \in E}\psi(w)\,\omega^*_{\alpha}(z,E,D).
\]
\end{lemma}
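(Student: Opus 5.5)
The plan is to compare the two extremal classes by scaling, and then pass to upper semicontinuous regularizations. Set $a=-\inf_{E}\psi>0$ and $b=-\sup_{E}\psi>0$; these are positive finite numbers because $\psi$ is bounded with $\sup_E\psi<0$. Then
\[
-a\le \psi(w)\le -b \qquad \text{for all } w\in E .
\]
Both inequalities follow by comparing $\mathcal{U}(E,D)$ with $\mathcal{U}(E,D,\psi)$ under multiplication by positive constants. This is legitimate because of property (1) in Section~\ref{alfa subharmonic function}: a nonnegative multiple of an $\alpha$-subharmonic function is again $\alpha$-subharmonic.

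For the left inequality, take any $u\in\mathcal{U}(E,D)$ and put $v=a\,u$. Then $v\in\alpha\text{-}sh(D)$ and $v<0$ on $D$. On $E$ we have
\[
v\le -a=\inf_E\psi\le \psi,
\]
so $v\in\mathcal{U}(E,D,\psi)$. Hence $a\,u(z)\le\omega_\alpha(z,E,D,\psi)$ for every $z$. Taking the supremum over $u$ gives $a\,\omega_\alpha(z,E,D)\le \omega_\alpha(z,E,D,\psi)$.

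For the right inequality, take any $v\in\mathcal{U}(E,D,\psi)$ and put $u=v/b$. Again $u\in\alpha\text{-}sh(D)$ and $u<0$. On $E$,
\[
u\le \frac{\psi}{b}\le \frac{-b}{b}=-1,
\]
so $u\in\mathcal{U}(E,D)$, and therefore $v\le b\,\omega_\alpha(\cdot,E,D)$. Taking the supremum gives $\omega_\alpha(z,E,D,\psi)\le b\,\omega_\alpha(z,E,D)$.

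It remains to pass to upper semicontinuous regularizations. For a constant $c>0$ we have $(c f)^*=c f^*$, since $\limsup_{w\to z} c f(w)=c\limsup_{w\to z} f(w)$. Regularization is also monotone: $f\le g$ implies $f^*\le g^*$. Applying these two facts to the pointwise chain
\[
a\,\omega_\alpha(z,E,D)\le\omega_\alpha(z,E,D,\psi)\le b\,\omega_\alpha(z,E,D)
\]
yields exactly the claimed inequalities for $\omega^*_\alpha$.

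The only place needing care is the sign bookkeeping. We multiply by the positive numbers $a$ and $1/b$, not by $\inf_E\psi$ itself, and we use $\psi\le -b$ together with $b>0$ in the second step. No step is a genuine obstacle; the argument uses only the definitions.
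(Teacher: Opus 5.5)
Your proof is correct and takes the same route as the paper. Both arguments multiply elements of $\mathcal{U}(E,D)$ by $-\inf_E\psi$ to land in $\mathcal{U}(E,D,\psi)$, multiply elements of $\mathcal{U}(E,D,\psi)$ by $-1/\sup_E\psi$ to land in $\mathcal{U}(E,D)$, and then take suprema. Your explicit regularization step, using $(cf)^*=cf^*$ for $c>0$ and monotonicity of ${}^*$, simply spells out a passage the paper leaves implicit.
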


\begin{proof}
Take an arbitrary function $u \in \mathcal{U}(E,D)$, that is,
\[
u \in \alpha\text{-}sh(D), \qquad u<0 \ \text{on } D, \qquad u|_E \leq -1.
\]
Since $\sup\limits_{w \in E}\psi(w)<0$, we have 
\[
\left(-\inf_{w \in E}\psi(w)\,u(z)\right)\Big|_D<0
\]
and
\[
\left(-\inf_{w \in E}\psi(w)\,u(z)\right)\Big|_E
\leq
\inf_{w \in E}\psi(w)
\leq
\psi|_E.
\]
Hence,
\[
-\inf_{w \in E}\psi(w)\,u\in \mathcal{U}(E,D,\psi)
\]
and then
\[
-\inf_{w \in E}\psi(w)\,u(z)
\leq
\omega_\alpha^*(z,E,D,\psi),
\qquad \forall z \in D.
\]
Since $u\in \mathcal{U}(E,D)$ is arbitrary, we obtain
\[
-\inf_{w \in E}\psi(w)\,\omega_\alpha^*(z,E,D)
\leq
\omega_\alpha^*(z,E,D,\psi),
\qquad \forall z \in D.
\]

It remains to prove the reverse estimate
\[
\omega_\alpha^*(z,E,D,\psi)
\leq
-\sup_{w \in E}\psi(w)\,\omega_\alpha^*(z,E,D).
\]
Take an arbitrary function $u \in \mathcal{U}(E,D,\psi)$ and consider the  function
$-\frac{u(z)}{\sup\limits_{w \in E}\psi(w)}.$
Since $\sup\limits_{w \in E}\psi(w)<0$, this function is $\alpha$-subharmonic in $D$. Moreover,
\[
-\frac{u(z)}{\sup\limits_{w \in E}\psi(w)}<0
\quad \forall z \in  D,
\quad \text{and} \quad
-\frac{u(z)}{\sup\limits_{w \in E}\psi(w)}
\leq -1, \quad \forall z \in E.
\]
Thus, the function $-\frac{u(z)}{\sup\limits_{w \in E}\psi(w)}$ belongs to the class $\in \mathcal{U}(E,D)$.
Therefore,

\[
u(z)
\leq
-\sup_{w \in E}\psi(w)\,\omega_\alpha^*(z,E,D).
\]
Since $u\in \mathcal{U}(E,D,\psi)$ is arbitrary, we get
\[
\omega_\alpha^*(z,E,D,\psi)
\leq
-\sup_{w \in E}\psi(w)\,\omega_\alpha^*(z,E,D),
\qquad \forall z \in D.
\]
The proof is complete.
\end{proof}

Combining Proposition~\ref{prop: properties of alpha-measure} with Lemma~\ref{connection inequality}, we obtain the following result.

\begin{corollary}
The weighted $\alpha$-subharmonic measure $\omega_\alpha^*(z,E,D,\psi)$ is identically equal to zero if and only if $E$ is an $\alpha$-polar set, i.e., there exists a function $\rho\in \alpha\text{-}sh(D)$ such that $\rho|_E=-\infty$ and $\rho\not\equiv -\infty$. 
\end{corollary}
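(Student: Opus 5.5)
The statement is the corollary: $\omega_\alpha^*(z,E,D,\psi)\equiv 0$ if and only if $E$ is $\alpha$-polar. The plan is to sandwich the weighted measure between multiples of the unweighted one using Lemma~\ref{connection inequality}, and then apply item (6) of Proposition~\ref{prop: properties of alpha-measure}. Throughout, put
\[
a:=-\inf_{w\in E}\psi(w), \qquad b:=-\sup_{w\in E}\psi(w).
\]
By the standing assumptions on $\psi$ (bounded, with $\sup_E\psi<0$), we have $0<b\le a<\infty$. Lemma~\ref{connection inequality} then reads
\[
a\,\omega^*_\alpha(z,E,D)\le \omega^*_\alpha(z,E,D,\psi)\le b\,\omega^*_\alpha(z,E,D),\qquad z\in D.
\]

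\emph{Polar $\Rightarrow$ zero.} Suppose $E$ is $\alpha$-polar. By Proposition~\ref{prop: properties of alpha-measure}(6), $\omega^*_\alpha(z,E,D)\equiv 0$. The left inequality then gives $\omega^*_\alpha(z,E,D,\psi)\ge 0$. We already know $\omega^*_\alpha(z,E,D,\psi)\le 0$, so the weighted measure vanishes identically.

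\emph{Zero $\Rightarrow$ polar.} Suppose $\omega^*_\alpha(z,E,D,\psi)\equiv 0$. The right inequality gives $0\le b\,\omega^*_\alpha(z,E,D)$. Since $b>0$ and $\omega^*_\alpha(z,E,D)\le 0$, this forces $\omega^*_\alpha(z,E,D)\equiv 0$. Proposition~\ref{prop: properties of alpha-measure}(6) then shows that $E$ is $\alpha$-polar.

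The only point needing care is the strict positivity of $b$ and the finiteness of $a$. These are exactly the standing assumptions $\sup_E\psi<0$ and $\psi$ bounded, so there is no real obstacle. The substantive work sits in Proposition~\ref{prop: properties of alpha-measure}(6), which we take as given.
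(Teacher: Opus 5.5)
Your proposal is correct and is essentially the paper's own argument: the paper obtains the corollary by combining the two-sided bound of Lemma~\ref{connection inequality} with item (6) of Proposition~\ref{prop: properties of alpha-measure}, exactly as in your sandwich. Your explicit observation that $-\sup_E\psi>0$ and $-\inf_E\psi<\infty$ follow from the standing assumptions on $\psi$ fills in a detail the paper leaves implicit.
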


Let $E$ be an arbitrary subset of $D$.

\begin{proposition}\label{alpha-harmonic}
The weighted $\alpha$-subharmonic measure
$\omega_{\alpha}^*(z,E,D,\psi)$ is $\alpha$-harmonic in
$D\setminus \overline{E}$.
\end{proposition}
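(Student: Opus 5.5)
The plan is the standard Perron--Poisson modification argument, carried out with the Poisson kernel $P_\alpha$ of a ball. Write $\omega^*=\omega_\alpha^*(\cdot,E,D,\psi)$. We already know $\omega^*\in\alpha\text{-}sh(D)$, so it suffices to show that for every ball $B=B(z^0,r)\Subset D\setminus\overline{E}$ the function $\omega^*$ coincides in $B$ with its Poisson integral
\[
h(z)=\int_{\partial B}P_\alpha(z,\xi)\,\omega^*(\xi)\,d\sigma(\xi),\qquad z\in B .
\]
The function $h$ is $\alpha$-harmonic in $B$, since it is an average of the $\alpha$-harmonic functions $P_\alpha(\cdot,\xi)$ against a bounded measure. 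The inequality $\omega^*\le h$ in $B$ is exactly the equivalent form of \eqref{puasson} recalled in Section~\ref{alfa subharmonic function}. It remains to prove $h\le\omega^*$ in $B$.

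For the reverse inequality take any $u\in\mathcal U(E,D,\psi)$ and form its Poisson modification
\[
\tilde u(z)=\begin{cases}\displaystyle\int_{\partial B}P_\alpha(z,\xi)\,u(\xi)\,d\sigma(\xi), & z\in B,\\[4pt] u(z), & z\in D\setminus B.\end{cases}
\]
To see that $\tilde u\in\mathcal U(E,D,\psi)$, we check three things.
\begin{enumerate}
\item Since $\overline B\cap E=\emptyset$, we have $\tilde u=u\le\psi$ on $E$.
\item We have $\tilde u<0$, because $u<0$ on $\partial B$ and $\int P_\alpha\,d\sigma=1$ give the strict inequality.
\item The function $\tilde u$ is $\alpha$-subharmonic in $D$.
\end{enumerate}
Item 3 is the main obstacle. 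If $u$ is continuous, then $\tilde u$ is continuous on $\overline B$ with boundary values $u$ (a Dirichlet property of $P_\alpha$, which I would take from \cite{ABIS}). Then $u\le\tilde u$ in $B$, and the local Poisson characterization \eqref{puasson} gives subharmonicity across $\partial B$: at points of $\partial B$ one has $\tilde u=u\le$ the Poisson mean of $u\le$ the Poisson mean of $\tilde u$.

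For general upper semicontinuous $u$, I would choose continuous $\varphi_k\downarrow u$ on $\partial B$. I would then solve the Dirichlet problem with data $\varphi_k$ and use monotone convergence, so that $\tilde u$ is a decreasing limit of functions $\max$-glued as above. The decreasing-limit property then gives $\tilde u\in\alpha\text{-}sh(D)$. If the Dirichlet continuity is unavailable, I would instead verify \eqref{puasson} directly at boundary points via upper semicontinuity.

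Once $\tilde u\in\mathcal U(E,D,\psi)$, we have $\tilde u\le\omega_\alpha\le\omega^*$, hence
\[
\int_{\partial B}P_\alpha(z,\xi)\,u(\xi)\,d\sigma(\xi)\le\omega^*(z),\qquad z\in B,
\]
for every $u$ in the class. Choquet's lemma gives a countable family $u_j$ with $(\sup_j u_j)^*=\omega^*$. Replacing $u_j$ by $\max(u_1,\dots,u_j)$, which stays in the class by property 3, we may assume $u_j$ increases. Then $\sup_j u_j=\omega_\alpha$ agrees with $\omega^*$ off a set of Lebesgue measure zero, since a negligible-set result for $\alpha$-subharmonic envelopes follows from the strong upper semicontinuity in Definition~\ref{alpha-definition}.

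The remaining issue is that this exceptional set has Lebesgue measure zero in $\mathbb C^n$, not $\sigma$-measure zero on $\partial B$. I would handle it by averaging over radii $r'\in(r/2,r)$ with Fubini: for almost every $r'$, the set has $\sigma$-measure zero on $\partial B(z^0,r')$. Monotone convergence then yields $h_{r'}\le\omega^*$ on $B(z^0,r')$ for such $r'$, while $\omega^*\le h_{r'}$ as before. Hence $\omega^*=h_{r'}$ is $\alpha$-harmonic in $B(z^0,r')$. Since $z^0$ was an arbitrary center of a ball in $D\setminus\overline E$, this shows $\omega^*$ is $\alpha$-harmonic in $D\setminus\overline{E}$.
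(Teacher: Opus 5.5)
Your argument is sound in outline, but at the decisive step you take a different route from the paper. You compare $\omega^*$ with its own Poisson integral $h=\int_{\partial B}P_\alpha(\cdot,\xi)\,\omega^*(\xi)\,d\sigma(\xi)$. The paper instead compares it with $w=\lim_m w_m$, the increasing limit of the Poisson modifications of $v_m=\max\{u_1,\dots,u_m\}$, that is, with $w=\int_{\partial B}P_\alpha(\cdot,\xi)\,\sup_j u_j(\xi)\,d\sigma(\xi)$. With the paper's choice, $w_m\le\omega^*$ gives $w\le\omega^*$. On the other side, $\sup_j u_j\le w$ in $B$ together with the continuity of the $\alpha$-harmonic function $w$ gives $\omega^*=(\sup_j u_j)^*\le w$ in $B$. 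No exceptional set appears, and the conclusion holds on every ball $B\Subset D\setminus\overline E$. Your choice of $h$ forces you to know that $\sup_j u_j=\omega^*$ $\sigma$-almost everywhere on spheres, which is why you need the Lebesgue-null set and Fubini over radii. That detour is legitimate in principle, but it rests on a lemma you have not actually justified.

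The lemma is not justified by strong upper semicontinuity alone, which gives only \emph{uniqueness}: two $\alpha$-subharmonic functions that agree almost everywhere agree everywhere. To get $\sup_j u_j=(\sup_j u_j)^*$ almost everywhere, you also need \emph{existence} of an $\alpha$-subharmonic function $v$ equal a.e.\ to the $L^1_{loc}$ function $\sup_j u_j$. That function does satisfy $dd^c(\sup_j u_j)\wedge\alpha\ge 0$ as a current, by monotone convergence. Existence could come from the fundamental-solution representation, or from solid Poisson means plus Lebesgue differentiation, which needs bounds on $P_\alpha$. Given such a $v$, strong upper semicontinuity does finish the job. Taking $\limsup$ along the full-measure set where $v=\sup_j u_j$ gives $v\le(\sup_j u_j)^*$; it also gives $u_j\le v$ everywhere, so $(\sup_j u_j)^*\le v$. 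But this is extra machinery the paper never needs.

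The cheapest repair is to adopt the paper's trick: replace $h$ by $\int_{\partial B}P_\alpha\,\sup_j u_j\,d\sigma$. Your Poisson-modification step and monotone convergence then give both inequalities directly.

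Two smaller points:
\begin{enumerate}
\item Choquet's lemma gives $(\sup_j u_j)^*=\omega^*$, not $\sup_j u_j=\omega_\alpha$.
\item In your item 3, the direct verification at points of $\partial B$ is the route that works. It uses $u\le\tilde u$, $P_\alpha\ge 0$, and continuous majorants of $u$ together with the boundary continuity of the Poisson integral for upper semicontinuity. The decreasing-limit route works only if the $\varphi_k$ are traces of continuous $\alpha$-subharmonic approximants of $u$ near $\overline B$. For merely continuous $\varphi_k\ge u$ on $\partial B$, the glued functions are not upper semicontinuous on $\partial B$.
\end{enumerate}
On item 3 your treatment is more careful than the paper's ``easy to verify''.
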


\begin{proof}
We assume that $D\setminus \overline{E}\neq \varnothing$.  By the technical Choquet lemma (see, for example, \cite{AS}), there exists
a sequence $\{u_j\}\subset \mathcal{U}(E,D,\psi)$ such that
\[
\left(\sup_{j\geq 1} u_j(z)\right)^*
=
\omega_{\alpha}^*(z,E,D,\psi).
\]
For each $m\in\mathbb{N}$, put
\[
v_m(z)=\max\{u_1(z),u_2(z),\ldots,u_m(z)\}.
\]
Then $v_m\in \mathcal{U}(E,D,\psi)$ and the sequence $\{v_m\}$ increases
pointwise to $\sup_{j\geq 1}u_j(z)$.

Let $B\Subset D\setminus \overline{E}$ be a ball. For each $m$, define
\[
w_m(z)=
\begin{cases}
\displaystyle
\int_{\partial B} v_m(\xi)P_{\alpha}(z,\xi)\,d\sigma(\xi),
& z\in B,\\[2ex]
v_m(z), & z\in D\setminus B,
\end{cases}
\]
where $P_{\alpha}(z,\xi)$ denotes the corresponding Poisson kernel in $B$.
By the submean value property for $\alpha$-subharmonic functions, we have
\[
v_m(z)
\leq
\int_{\partial B} v_m(\xi)P_{\alpha}(z,\xi)\,d\sigma(\xi)
=
w_m(z),
\qquad z\in B.
\]

By the construction of $w_m$, it is $\alpha$-harmonic in $B$ and $\alpha$-subharmonic in $D$. In addition, it is easy to verify the fact $w_m\in \mathcal{U}(E,D,\psi)$.

Moreover, since $\{v_m\}$ is increasing, the sequence $\{w_m\}$ is also
increasing. Thus, Levi's theorem
(see \cite{KK}, Theorem~2, p.~58) gives, for every $z\in B$,
\[
\lim_{m\to\infty}w_m(z)
=
\lim_{m\to\infty}
\int_{\partial B} v_m(\xi)P_{\alpha}(z,\xi)\,d\sigma(\xi)
=
\int_{\partial B}
\lim_{m\to\infty}v_m(\xi)P_{\alpha}(z,\xi)\,d\sigma(\xi).
\]
Denote
\[
w(z):=\lim_{m\to\infty}w_m(z).
\]
Then
\[
w(z)
=
\int_{\partial B}w(\xi)P_{\alpha}(z,\xi)\,d\sigma(\xi),
\qquad z\in B,
\]
and hence $w$ is $\alpha$-harmonic in $B$.
From $v_m\leq w_m$ in $B$, we obtain
\[
\sup_{j\geq 1}u_j(z)
=
\lim_{m\to\infty}v_m(z)
\leq
\lim_{m\to\infty}w_m(z)
=
w(z).
\]
Since $w_m\in \mathcal{U}(E,D,\psi)$ for every $m$, by the definition of
$\omega_{\alpha}^*(z,E,D,\psi)$, we also have
\[
w_m(z)\leq \omega_{\alpha}^*(z,E,D,\psi)
=
\left(\sup_{j\geq 1}u_j(z)\right)^* \le w(z), \qquad \forall z \in B.
\]
Passing to the limit as $m\to\infty$, it follows that
\[
w(z)=
\omega_{\alpha}^*(z,E,D,\psi), \qquad \forall z \in B. 
\]

Thus, $\omega_{\alpha}^*(z,E,D,\psi)$ is $\alpha$-harmonic in $B$.
Since $B\Subset D\setminus \overline{E}$ was arbitrary, the assertion follows. The proof is complete.
\end{proof}

Proposition~\ref{alpha-harmonic} immediately yields the following result in the unweighted case.

\begin{corollary}
The $\alpha$-subharmonic measure $\omega_{\alpha}^*(z,E,D)$ is
$\alpha$-harmonic in $D\setminus \overline{E}$.
\end{corollary}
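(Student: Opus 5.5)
The plan is to obtain the statement as the special case $\psi\equiv -1$ of Proposition~\ref{alpha-harmonic}, rather than repeating the Perron-type argument.

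First, I would check that the constant weight $\psi\equiv -1$ on $E$ is admissible in the setting of Section~\ref{weighted alpha measure}. It is bounded on $E$, and $\sup_{E}\psi=-1<0$, so the standing assumptions used to define $\mathcal{U}(E,D,\psi)$ and $\omega_\alpha^*(z,E,D,\psi)$ hold.

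Second, I would identify the two extremal functions. For $\psi\equiv-1$ the class $\mathcal{U}(E,D,-1)$ consists of all $u\in\alpha\text{-}sh(D)$ with $u|_E\le -1$ and $u<0$ on $D$. This is exactly the class $\mathcal{U}(E,D)$ of Section~\ref{alpha subharmonic measure}. Hence $\omega_\alpha(z,E,D,-1)=\omega_\alpha(z,E,D)$ pointwise, and taking upper semicontinuous regularizations gives
\[
\omega_\alpha^*(z,E,D,-1)\equiv\omega_\alpha^*(z,E,D),
\]
as already noted after Definition~\ref{weighted measure}.

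Third, I would apply Proposition~\ref{alpha-harmonic} with $\psi\equiv -1$. It shows that $\omega_\alpha^*(z,E,D,-1)$ is $\alpha$-harmonic in $D\setminus\overline{E}$, and by the identity above the same holds for $\omega_\alpha^*(z,E,D)$. If $D\setminus\overline{E}=\varnothing$, there is nothing to prove.

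There is no real obstacle here; the only point requiring care is the admissibility check in the first step, which ensures that Proposition~\ref{alpha-harmonic} applies verbatim.
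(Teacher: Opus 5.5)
Your proposal is correct and is essentially the paper's argument: the paper deduces the corollary in one line from Proposition~\ref{alpha-harmonic} by taking $\psi\equiv -1$ and using $\omega_\alpha^*(z,E,D,-1)\equiv\omega_\alpha^*(z,E,D)$. Your check that the constant weight is admissible and that $\mathcal{U}(E,D,-1)=\mathcal{U}(E,D)$ just spells out what that specialization implicitly uses.
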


The following proposition establishes convergence properties of the weighted
$\alpha$-subharmonic measure with respect to decreasing sequences of open
neighborhoods and increasing compact exhaustions.

\begin{proposition}
The following relations hold.
\begin{enumerate}

\item Let $E\subset D$ be an arbitrary set and let $\psi$ be lower semicontinuous on $E$. Then, there exists a
decreasing sequence of open sets $U_j\supset E$ such that
\[
\left(\lim_{j\to\infty}\omega_{\alpha}^{*}(z,U_j,D,\widetilde{\psi})\right)^{*}
=
\omega_{\alpha}^{*}(z,E,D,\psi),
\]
where $\widetilde{\psi}$ is lower semicontinuous in some open neighborhood
$V$ of $E$ and satisfies
\[
\widetilde{\psi}|_E=\psi|_E, \qquad
\sup_{z\in V}\widetilde{\psi}(z)<0.
\]

\item \label{item: second} Let $U\subset D$ be an open set and let
$U=\bigcup_{j=1}^{\infty}K_j,$
where $K_j\subset K_{j+1}^{\circ}$ are compact sets. Assume that $\psi$ is
upper semicontinuous in $U$. Then
\[
\omega_{\alpha}^{*}(z,K_j,D,\psi)
\downarrow
\omega_{\alpha}^{*}(z,U,D,\psi).
\]
\end{enumerate}
\end{proposition}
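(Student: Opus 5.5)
For part (1), I would take $\{u_k\}$ from the Choquet lemma, exactly as in the proof of Proposition~\ref{alpha-harmonic}. This gives $(\sup_k u_k)^*=\omega_\alpha^*(z,E,D,\psi)$ with each $u_k<0$ and $u_k\le\psi$ on $E$. I would then replace $u_k$ by the increasing maxima $v_m=\max\{u_1,\dots,u_m\}$.

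To pass from $E$ to open sets, I would use the lower semicontinuity of $\widetilde\psi$ on $V$ together with the strong upper semicontinuity of $v_m$. For each $j$ I set
\[
U_j=V\cap\bigcap_{m\le j}\{z: v_m(z)-\tfrac1j<\widetilde\psi(z)\}.
\]
The difference $v_m-\widetilde\psi$ is upper semicontinuous, so this set is open. It contains $E$, since on $E$ we have $v_m\le\psi=\widetilde\psi$, so $v_m-\tfrac1j<\widetilde\psi$ there. Intersecting with the previous $U_{j-1}$ makes the sequence decreasing.

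On $U_j$ the function $v_j-\tfrac1j$ is below $\widetilde\psi$ and is negative. Hence $v_j-\tfrac1j\le\omega_\alpha^*(z,U_j,D,\widetilde\psi)$, which after letting $j\to\infty$ and regularizing gives the inequality "$\ge$". The inequality "$\le$" is monotonicity, Lemma~\ref{prop: monotonicity}, since $E\subset U_j$ and $\widetilde\psi=\psi$ on $E$.

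The one point I must check is that $\sup_V\widetilde\psi<0$, so that the classes $\mathcal{U}(U_j,D,\widetilde\psi)$ are defined as in the paper. This is assumed, so it is fine.

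For part (2), monotonicity gives that $\omega_\alpha^*(z,K_j,D,\psi)$ is decreasing in $j$ and bounded below by $\omega_\alpha^*(z,U,D,\psi)$. Let $v=\lim_j\omega_\alpha^*(z,K_j,D,\psi)$. As a decreasing limit of $\alpha$-subharmonic functions, $v$ is $\alpha$-subharmonic and $v\le 0$. By the maximum principle, either $v\equiv0$ or $v<0$.

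It remains to show $v\le\psi$ on $U$; then $v\in\mathcal U(U,D,\psi)$ gives $v\le\omega_\alpha\le\omega_\alpha^*$ on $U$. Fix $z\in U$. Then $z\in K_j^\circ$ for all large $j$.

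The main obstacle is that $\omega_\alpha^*(\cdot,K_j,D,\psi)$ may exceed $\psi$ at non-regular points of $K_j$. The regularization can only spoil the inequality on a set with empty interior inside $K_j$. I would argue as follows. The function $\omega_\alpha(\cdot,K_j,D,\psi)$ is at most $\psi$ on $K_j^\circ$, and $\psi$ is upper semicontinuous. Therefore at an interior point $z$,
\[
\omega_\alpha^*(z)=\limsup_{w\to z}\omega_\alpha(w)\le\limsup_{w\to z}\psi(w)\le\psi(z),
\]
because $w$ ranges over the open set $K_j^\circ$ near $z$. This is where both the hypothesis $K_j\subset K_{j+1}^\circ$ and the upper semicontinuity of $\psi$ are used. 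Hence $v(z)\le\psi(z)$ for every $z\in U$.

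If $v\equiv0$, this would force $\psi\ge0$ on $U$, contradicting the standing assumption $\sup\psi<0$. So $v<0$, and $v\in\mathcal U(U,D,\psi)$. This gives $v\le\omega_\alpha(z,U,D,\psi)\le\omega_\alpha^*(z,U,D,\psi)$, so $v=\omega_\alpha^*(z,U,D,\psi)$. The same interior argument shows $\omega_\alpha^*=\omega_\alpha$ for the open set $U$, consistent with Proposition~\ref{prop: properties of alpha-measure}.
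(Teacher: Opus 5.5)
Your proposal is correct and follows the paper's proof essentially step by step. In part (1), your sets $U_j$ coincide with the paper's $\{z\in V: v_j-\widetilde{\psi}<1/j\}$, since $v_m\le v_j$ for $m\le j$; in part (2), you use the same decreasing-limit argument showing the limit lies in $\mathcal{U}(U,D,\psi)$, and you are more careful than the paper in explicitly proving $\omega_\alpha^*(z,K_j,D,\psi)\le\psi(z)$ at interior points (via upper semicontinuity of $\psi$) and $v<0$ (via the maximum principle), both of which the paper leaves implicit.
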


\begin{proof}
We prove the first assertion.  By Choquet's lemma (see, for example, \cite{AS}),
there exists a countable family
\[
\{u_k\}\subset\mathcal{U}(E,D,\psi)
\]
such that
\[
\left(\sup_k u_k(z)\right)^{*}
=
\omega_{\alpha}^{*}(z,E,D,\psi).
\]
Define a sequence $\{v_j\}$ as follows:
\[
v_j(z)=\max\{u_1(z),u_2(z),\ldots,u_j(z)\}.
\]
Then the sequence $\{v_j\}$ is increasing and
\[
\left(\lim_{j\to\infty}v_j(z)\right)^{*}
=
\omega_{\alpha}^{*}(z,E,D,\psi).
\]

Since $\psi$ is lower semicontinuous on $E$, the function $-\psi$ is upper semicontinuous on $E$. It is well known that every upper semicontinuous function defined on a set can be extended as an upper semicontinuous function to some neighborhood of that set.

For each $j\in\mathbb{N}$, consider the following open sets
\[
U_j=\left\{z\in V: v_j(z)-\widetilde{\psi}(z)<\frac{1}{j}\right\},
\]
where $V\subset D$ is an open neighborhood of $E$, the function
$-\widetilde{\psi}$ is upper semicontinuous on $V$, and
\[
\widetilde{\psi}|_E=\psi|_E,  \qquad \sup_{z \in V}\widetilde{\psi}(z)<0.
\]
By construction, for all $j \in \mathbb{N}$,
$$E\subset U_j
\quad
U_{j+1}\subset U_j, \quad  \text{and}\quad v_j-\frac{1}{j}\in\mathcal{U}(U_j,D,\widetilde{\psi}) $$
Hence,
$v_j(z)-\frac{1}{j}
\leq
\omega_{\alpha}^{*}(z,U_j,D,\widetilde{\psi})$
and by the monotonicity property,
\[
v_j(z)-\frac{1}{j}
\leq
\omega_{\alpha}^{*}(z,U_j,D,\widetilde{\psi})
\leq
\omega_{\alpha}^{*}(z,E,D,\widetilde{\psi}).
\]
Letting $j\to\infty$ and taking the upper regularization, we get
\[
\left(\lim_{j\to\infty}\omega_{\alpha}^{*}(z,U_j,D,\widetilde{\psi})\right)^{*}
=
\omega_{\alpha}^{*}(z,E,D,\widetilde{\psi}).
\]

We next prove assertion~\ref{item: second}. Since $K_j\subset K_{j+1}$, the monotonicity of the weighted $\alpha$-subharmonic measure implies that the sequence
\[
\left\{\omega_{\alpha}^{*}(z,K_j,D,\psi)\right\}_{j=1}^{\infty}
\]
is decreasing. Moreover, for every $z\in D$ and every $j\in\mathbb{N}$, we have
\[
\omega_{\alpha}^{*}(z,K_j,D,\psi)
\geq
\omega_{\alpha}^{*}(z,U,D,\psi).
\]
Hence, if we define
\[
\omega_{\alpha}(z)
:=
\lim_{j\to\infty}\omega_{\alpha}^{*}(z,K_j,D,\psi),
\]
then $\omega_{\alpha}\in\alpha\text{-}sh(D)$, being the decreasing limit of $\alpha$-subharmonic functions, and
\begin{equation} \label{first inequality}
\omega_{\alpha}(z)\geq \omega_{\alpha}^{*}(z,U,D,\psi), \quad \forall z \in D.
\end{equation}

It remains to prove the reverse inequality. Let $z\in U$. Since
\[
U=\bigcup_{j=1}^{\infty}K_j
\quad\text{and}\quad
K_j\subset K_{j+1}^{\circ},
\]
there exists $j_0$ such that $z\in K_j$ for all $j\geq j_0$. Hence, by the definition of $\omega_{\alpha}^{*}(z,K_j,D,\psi)$ and the upper
semicontinuity of $\psi$ on $U$, we have $\omega_{\alpha}(z)\leq \psi(z)$
for every $z\in U$.\\
Consequently, $\omega_{\alpha}\in \mathcal{U}(U,D,\psi)$  and thus,

\begin{equation}\label{second inequality}
\omega_{\alpha}(z)
\leq
\omega_{\alpha}^{*}(z,U,D,\psi), \quad \forall z \in D.
\end{equation}
Combining inequalities \eqref{first inequality} and \eqref{second inequality}, we conclude that
\[
\omega_{\alpha}^{*}(z,K_j,D,\psi)
\downarrow
\omega_{\alpha}^{*}(z,U,D,\psi).
\]
The proof is complete.
\end{proof}

We next state a theorem which extends Proposition~\ref{Pro sets} and will play an important role in the subsequent sections.

\begin{proposition}\label{extends}
The following properties hold for the weighted $\alpha$-subharmonic measure.
\begin{enumerate}

\item \label{item:1}
Let $A \subset D$ be an $\alpha$-polar set. Then, for any set $E\subset D$,
\[
\omega_{\alpha}^*(z,E,D,\psi)
=
\omega_{\alpha}^*(z,E \cup A,D,\psi)
\]
for all $z\in D$, where $\psi$ is defined on $E \cup A$.

\item \label{item:2}
If $E\subset \subset D$, then
\[
\lim_{z\to \partial D,\; z\in D}
\omega_\alpha^*(z,E,D,\psi)=0.
\]

\item \label{item:3}
Let $\{K_j\}$ be a sequence of compact subsets of $D$ such that
$K_{j+1}\subset K_j$ and let $\psi$ be lower semicontinuous in  $K_1$. Then
\[
\lim_{j\to\infty}\omega_{\alpha}(z,K_j,D,\psi)
=
\omega_{\alpha}(z,\bigcap_{j=1}^{\infty}K_j,D,\psi)
\]
for all $z\in D$.

\item \label{item:4}
Let $E \subset \subset D_1$ and let $\{D_j\}$ be an increasing sequence of domains such that \\
$D_j\subset D_{j+1},
\quad
D=\bigcup_{j=1}^{\infty}D_j.$
Then
\[
\lim_{j\to\infty}
\omega_{\alpha}^{*}(z,E,D_j,\psi)
=
\omega_{\alpha}^{*}(z,E,D,\psi).
\]
\end{enumerate}
\end{proposition}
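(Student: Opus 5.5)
I would prove the four items separately, using standard extremal-function arguments with the properties of $\alpha$-subharmonic functions from Section~\ref{alfa subharmonic function} and the monotonicity in Lemma~\ref{prop: monotonicity}.

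\emph{Item~\ref{item:1}.} Monotonicity gives $\omega_\alpha^*(z,E\cup A,D,\psi)\le\omega_\alpha^*(z,E,D,\psi)$. For the reverse inequality, take $\rho\in\alpha\text{-}sh(D)$ with $\rho|_A=-\infty$ and $\rho\not\equiv-\infty$. Replacing $\rho$ by $\rho-\sup_D\rho$ on a slightly smaller domain (or by $\max(\rho,-N)$ truncations) makes $\rho<0$. For $u\in\mathcal{U}(E,D,\psi)$ and $\varepsilon>0$, the function $u+\varepsilon\rho$ lies in $\mathcal{U}(E\cup A,D,\psi)$. Letting $\varepsilon\to0$ gives $u\le\omega_\alpha(z,E\cup A,D,\psi)$ off the polar set $\{\rho=-\infty\}$. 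Taking the supremum over $u$ and then the upper regularizations, which are unaffected by the Lebesgue-null polar set thanks to strong upper semicontinuity, yields equality.

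\emph{Item~\ref{item:2}.} Since $E\Subset D$, choose $r$ with $\overline{E}\subset\{\rho<-r\}$, where $\rho$ is the defining function of $D$. Put $v=\frac{-\inf_E\psi}{r}\,\rho$. Then $v\in\mathcal{U}(E,D,\psi)$, since $v\le\inf_E\psi\le\psi$ on $E$ and $v<0$. Hence $v\le\omega_\alpha^*\le0$, and $v\to0$ at $\partial D$.

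\emph{Item~\ref{item:4}.} Put $K=\bigcap_j K_j$. Monotonicity gives $\omega_\alpha(z,K_j,D,\psi)\uparrow$ with limit $\le\omega_\alpha(z,K,D,\psi)$. For the reverse, fix $u\in\mathcal{U}(K,D,\psi)$ and $\varepsilon>0$. Because $u$ is upper semicontinuous and $\psi$ is lower semicontinuous, the set $\{u-\psi<\varepsilon\}$ contains $K$. Since $u<0$, I also have $\{u<\psi+\varepsilon\}\supset K$. This set is relatively open in $K_1$, so by compactness $K_j\subset\{u<\psi+\varepsilon\}$ for large $j$. The problem is that $u-\varepsilon$ may fail to be negative in the required way, so I would use $(1-\delta)u+\dots$ scalings instead. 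Concretely, take $\tilde u=\frac{u-\varepsilon}{1+\varepsilon/c}$ with $c=-\sup\psi>0$. On $K_j$ this gives $u-\varepsilon<\psi$, and after dividing, $\tilde u<\psi$ up to $O(\varepsilon)$. I would tune the constants so that $\tilde u\in\mathcal{U}(K_j,D,\psi)$ while $\tilde u\to u$ as $\varepsilon\to0$. The main obstacle is making this scaling compatible with both the weight constraint and negativity; using $\psi\le-c$ settles it, since $\lambda u$ with $\lambda>1$ close to $1$ satisfies $\lambda u\le\lambda(\psi+\varepsilon)\le\psi$ once $(\lambda-1)c\ge\lambda\varepsilon$. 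So I take $\lambda=1+2\varepsilon/c$.

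\emph{Item~\ref{item:3}.} Monotonicity in the domain gives that $\omega_\alpha^*(z,E,D_j,\psi)$ decreases and stays $\ge\omega_\alpha^*(z,E,D,\psi)$. Call its limit $v$; then $v$ is $\alpha$-subharmonic in $D$. It satisfies $v<0$ by the maximum principle, or $v\le0$ with strictness restored via $v+\varepsilon\rho$. On $E$ minus a polar set, $v\le\psi$, because each term is $\le\psi$ except on a negligible set where the regularization differs. By item~\ref{item:1} and the Choquet lemma, $v\le\omega_\alpha^*(z,E,D,\psi)$, which gives equality. The delicate point here is handling the regularization on $E$; I would use item~\ref{item:1} together with the fact that $\{\omega_\alpha<\omega_\alpha^*\}$ is $\alpha$-polar.
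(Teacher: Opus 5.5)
\paragraph{The gap: the increasing-domains item.} Your labels for items (3) and (4) are swapped: your ``Item 4'' proves the decreasing-compacts statement (3), and your ``Item 3'' treats the increasing-domains statement (4). Your arguments for items (1)--(3) essentially follow the paper's, with the small caveats at the end. The genuine gap is in the increasing-domains part.

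The limit $v=\lim_j\omega_\alpha^*(\cdot,E,D_j,\psi)$ is in general not a competitor for $E$. At any $z_0\in E$ with $\omega_\alpha^*(z_0,E,D,\psi)>\psi(z_0)$ you get $v(z_0)\ge\omega_\alpha^*(z_0,E,D,\psi)>\psi(z_0)$. Such points exist, e.g., for $\psi\equiv-1$ and $E$ not $\alpha$-regular.

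So you must discard a set such as $P=E\cap\{\omega_\alpha(\cdot,E,D_1,\psi)<\omega_\alpha^*(\cdot,E,D_1,\psi)\}$. One index suffices, since $v\le\omega_\alpha^*(\cdot,E,D_1,\psi)$. To then apply item (1), you need $P$ to be $\alpha$-polar. That is a Cartan/Bedford--Taylor type negligibility theorem for $\alpha$-subharmonic functions. It is neither proved nor cited in the paper, and it is a deep result in its own right. Nor can it be replaced by the fact that $P$ is Lebesgue-null: $E$ itself may be Lebesgue-null, and removing a Lebesgue-null subset of $E$ can change the measure completely. As written, the inequality $v\le\omega_\alpha^*(\cdot,E,D,\psi)$ is therefore unsupported.

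\paragraph{How the paper avoids it.} The paper sidesteps regularization by comparing competitors directly. Let $\rho$ be the exhaustion function of $D$, set $C=\inf_E\psi/\max_{\overline E}\rho>0$, and choose $\varepsilon_j\to0$ with $\{\rho\le-\varepsilon_j\}\subset D_j$. For $u\in\mathcal U(E,D_j,\psi)$, set $v=\max\{u-C\varepsilon_j,\,C\rho\}$ on $D_j$ and $v=C\rho$ on $D\setminus D_j$.

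For $\zeta\in\partial D_j\cap D$ one has $\rho(\zeta)>-\varepsilon_j$. Hence $\limsup_{z\to\zeta}\bigl(u(z)-C\varepsilon_j\bigr)\le-C\varepsilon_j<C\rho(\zeta)$, so the gluing is $\alpha$-subharmonic in $D$. Clearly $v<0$, and $v\le\psi$ on $E\Subset D_1$. Thus $v\in\mathcal U(E,D,\psi)$, which gives $\omega_\alpha^*(\cdot,E,D_j,\psi)-C\varepsilon_j\le\omega_\alpha^*(\cdot,E,D,\psi)$ on $D_j$. Letting $j\to\infty$ finishes the proof with no exceptional set.

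\paragraph{Smaller remarks.} In item (1), neither of your devices for making the polar witness negative works. Truncating to $\max(\rho,-N)$ destroys $\rho|_A=-\infty$. Normalizing on a smaller domain does not give an element of $\mathcal U(E\cup A,D,\psi)$. You need a witness that is negative on all of $D$, which the paper implicitly takes from \cite{ABSR}.

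In item (3), the obstacle you describe is not there. Since $u<0$, also $u-\varepsilon<0$. Since $u<\psi+\varepsilon$ on $K_j$, also $u-\varepsilon<\psi$ there. So $u-\varepsilon\in\mathcal U(K_j,D,\psi)$ directly, as in the paper. Your rescaling by $\lambda=1+2\varepsilon/c$ is valid for $\varepsilon\le c/2$ but unnecessary.
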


\begin{proof}
We first prove \ref{item:1}. Since $E\subset E\cup A$, the monotonicity property gives
\[
\omega_{\alpha}^*(z,E,D,\psi)
\geq
\omega_{\alpha}^*(z,E\cup A,D,\psi),
\qquad z\in D.
\]
Conversely, since $A$ is $\alpha$-polar in $D$, there exists a function
$v\in \alpha\text{-}sh(D)$, $v\not\equiv -\infty$, such that
$v|_A=-\infty$
(see \cite{ABSR}). Take an arbitrary function
$u\in \mathcal{U}(E,D,\psi)$ and fix $\varepsilon>0$. Then
\[
u+\varepsilon v\in \alpha\text{-}sh(D), \qquad u+\varepsilon v<0
\quad \text{in } D
\]
and
\[
(u+\varepsilon v)|_{E\cup A}\leq \psi|_{E\cup A}.
\]
Hence
\[
u+\varepsilon v\in \mathcal{U}(E\cup A,D,\psi)
\]
and
\[
u(z)+\varepsilon v(z)
\leq
\omega_{\alpha}(z,E\cup A,D,\psi),
\qquad \forall z\in D.
\]
Taking the supremum over all $u\in\mathcal{U}(E,D,\psi)$ and then letting
$\varepsilon\to 0$, we obtain
\[
\omega_{\alpha}^*(z,E,D,\psi)
\leq
\omega_{\alpha}^*(z,E\cup A,D,\psi).
\]

We now prove \ref{item:2}. Since $D$ is an $\alpha$-regular domain, there exists
a function $\rho\in \alpha\text{-}sh(D)$ such that
\[
\rho<0 \quad \text{in } D,
\qquad
\lim_{z\to\partial D}\rho(z)=0.
\]
Then $C\rho\in \mathcal{U}(E,D,\psi)$, where $ C=\frac{\inf\limits_{z\in E}\psi(z)}
        {\max\limits_{z\in \overline{E}}\rho(z)}$ and
\[
C\rho(z)
\leq
\omega_{\alpha}^*(z,E,D,\psi)
\leq 0,
\qquad \forall z\in D.
\]
Since $\rho(z)\to 0$ as $z\to\partial D$, it follows that
\[
\lim_{z\to \partial D,\; z\in D}
\omega_{\alpha}^*(z,E,D,\psi)=0.
\]

Next, we prove \ref{item:3}. Fix an arbitrary point $z^0\in D$ and an arbitrary number $\varepsilon>0$.
By the definition of $\omega_{\alpha}(z^0,K,D,\psi)$ with $K=\bigcap\limits_{j=1}^{\infty}K_j$, there exists
$u\in \mathcal{U}(K,D,\psi)$ such that
\[
\omega_{\alpha}(z^0,K,D,\psi)-\varepsilon<u(z^0).
\]
Since $\psi$ is lower semicontinuous on $K_1$, there exist a neighbourhood 
$V \subset D$ of $K_1$ and a lower semicontinuous function 
$\widetilde{\psi}$ on $V$ such that $\widetilde{\psi}|_{K_1}=\psi|_{K_1}$. 
Then the function $u-\widetilde{\psi}$ is upper semicontinuous on $V$. Hence, the set
\[
U=\{z\in V: u(z)<\widetilde{\psi}(z)+\varepsilon\}
\]
is open and contains $K$. Since $K_j\downarrow K$, there exists
$j_0\in\mathbb{N}$ such that
\[
K_j\subset U
\qquad \text{for all } j\geq j_0.
\]
Thus,
\[
u-\varepsilon\in \mathcal{U}(K_j,D,\widetilde{\psi})
\qquad \text{for all } j\geq j_0.
\]
Hence
\[
u(z)-\varepsilon
\leq
\omega_{\alpha}(z,K_j,D,\widetilde{\psi}),
\quad \forall z\in D,\quad \forall j\geq j_0.
\]
In particular, at $z=z^0$, we obtain
\[
\omega_{\alpha}(z^0,K,D,\widetilde{\psi})-2\varepsilon
\leq
\omega_{\alpha}(z^0,K_j,D,\widetilde{\psi}),
\quad \forall j\geq j_0.
\]
On the other hand, since $K\subset K_j$, the monotonicity property gives
\[
\omega_{\alpha}(z^0,K_j,D,\widetilde{\psi})
\leq
\omega_{\alpha}(z^0,K,D,\widetilde{\psi}).
\]
Therefore,
\[
\omega_{\alpha}(z^0,K,D,\widetilde{\psi})-2\varepsilon
\leq
\omega_{\alpha}(z^0,K_j,D,\widetilde{\psi})
\leq
\omega_{\alpha}(z^0,K,D,\widetilde{\psi}),
\quad \forall j\geq j_0.
\]
Since $z^0\in D$ and $\varepsilon>0$ were arbitrary, the assertion follows.

Finally, we prove \ref{item:4}. By the monotonicity property,
\[
\omega_{\alpha}^*(z,E,D_j,\psi)
\geq
\omega_{\alpha}^*(z,E,D_{j+1},\psi).
\]
Hence
\[
\omega_{\alpha}(z)
:=
\lim_{j\to\infty}\omega_{\alpha}^*(z,E,D_j,\psi)
\]
is $\alpha$-subharmonic in $D$ and for all $z \in D,$
\[
\omega_{\alpha}(z)
\geq
\omega_{\alpha}^*(z,E,D,\psi).
\]

Conversely, since $D$ is $\alpha$-regular, there exists a function
$\rho\in \alpha\text{-}sh(D)$ such that
\[
\rho<0 \quad \text{in } D,
\qquad
\lim_{z\to\partial D}\rho(z)=0.
\]
Choose a sequence $\varepsilon_j>0$ with $\varepsilon_j\to 0$ such that
\[
\{z\in D:\rho(z)\leq -\varepsilon_j\}\subset D_j.
\]
Take an arbitrary function $u\in \mathcal{U}(E,D_j,\psi)$ and define
\[
v(z)=
\begin{cases}
\max\{u(z)-C\varepsilon_j,\; C\rho(z)\}, & z\in D_j,\\[2mm]
C\rho(z), & z\in D\setminus D_j,
\end{cases}
\]
where
\[
C=
\frac{\inf\limits_{z\in E}\psi(z)}
     {\max\limits_{z\in \overline{E}}\rho(z)}.
\]
Then $v\in \mathcal{U}(E,D,\psi)$. Hence, for all $z\in D$,
\[
v(z)\leq \omega_{\alpha}^*(z,E,D,\psi).
\]
In particular, for any $z\in D_j$, we have the following inequality
\[
u(z)-C\varepsilon_j
\leq
v(z)
\leq
\omega_{\alpha}^*(z,E,D,\psi).
\]
Since $u\in \mathcal{U}(E,D_j,\psi)$ was chosen arbitrarily, we obtain
\[
\omega_{\alpha}^*(z,E,D_j,\psi)-C\varepsilon_j
\leq
\omega_{\alpha}^*(z,E,D,\psi),
\quad \forall z\in D_j.
\]
Passing to the limit as $j\to\infty$, we get
\[
\lim_{j\to\infty}
\omega_{\alpha}^*(z,E,D_j,\psi)
\leq
\omega_{\alpha}^*(z,E,D,\psi).
\]
This completes the proof.
\end{proof}

\section{$(\alpha,\psi)$-regularity of compacts} \label{regularity of compacts}

In this section, we present the proofs of Theorem~\ref{weighet continuity} and Theorem~\ref{thm: regularity}.
Assume that the function $\psi$ admits an $\alpha$-subharmonic extension to $D$, that is, there exists a function
\begin{equation}\label{alpha-subharmonic extension}
\widetilde{\psi} \in \alpha\text{-}sh(D), 
\qquad 
\widetilde{\psi}|_E=\psi|_E, 
\qquad 
\widetilde{\psi}<0 \quad \text{in } D.
\end{equation}
Then, by the definition of $\omega_{\alpha}(z,E,D,\psi)$, we have
\[
\omega_{\alpha}(z,E,D,\psi) \geq \widetilde{\psi}(z),
\qquad z\in D.
\]
In particular, since every function $u\in\mathcal{U}(E,D,\psi)$ satisfies $u|_E\leq \psi|_E$, it follows that
\begin{equation}\label{omega=psi in E}
\omega_{\alpha}(z,E,D,\psi)=\psi(z),
\qquad z\in E.
\end{equation}

 Throughout the rest of the paper, we assume that condition~\eqref{omega=psi in E} holds in the definition of
$\omega_{\alpha}(z,E,D,\psi)$.

 Let us recall the definition of $(\alpha,\psi)$-regularity. Let $D$ be an
$\alpha$-regular domain, and let $K\subset D$ be a compact set.

\begin{definition}\label{weighet reg}
A point $z_0\in K$ is called an \textit{$(\alpha,\psi)$-regular point of $K$ with respect to $D$} if
\[
\omega_{\alpha}^*(z_0,K,D,\psi)=\psi(z_0).
\]
It is called a \textit{locally $(\alpha,\psi)$-regular point of $K$ with respect to $D$} if, for every
neighborhood $B\subset \mathbb{C}^n$ of $z_0$, one has
\[
\omega_{\alpha}^*(z_0,K\cap \overline{B},D,\psi)=\psi(z_0).
\]
If every point of $K$ is $(\alpha,\psi)$-regular, then $K$ is called an
\textit{$(\alpha,\psi)$-regular compact set with respect to $D$}. If every point of $K$ is
locally $(\alpha,\psi)$-regular, then $K$ is called a
\textit{locally $(\alpha,\psi)$-regular compact set with respect to $D$}.
\end{definition}

We also note that, since $D$ is fixed throughout the paper, for convenience,
we omit the phrase “with respect to $D$” and simply say
$(\alpha,\psi)$-regular, respectively locally $(\alpha,\psi)$-regular,
compact sets.

It follows from Definition~\ref{weighet reg} and Lemma~\ref{prop: monotonicity} that local $(\alpha,\psi)$-regularity implies $(\alpha,\psi)$-regularity. The converse does not hold in general.

Our next goal is to prove Theorem~\ref{weighet continuity}. This theorem may be
viewed as a weighted version of Theorem~\ref{continuity}. It shows that
$(\alpha,\psi)$-regularity can be described by the continuity of the associated
weighted $\alpha$-subharmonic measure. Moreover, the weighted $\alpha$-subharmonic
measure is interpreted as a solution to the corresponding elliptic boundary value
problem.

\begin{proof}[proof of theorem \ref{weighet continuity}]
Let $K$ be a compact subset of $D$. Assume first that the function $\omega_{\alpha}(z,K,D,\psi)$ is continuous in $D$.
Then, by condition~\eqref{omega=psi in E}, we have
\[
\omega_{\alpha}(z,K,D,\psi)=\psi(z), \qquad z\in K.
\]
Hence, $\psi$ is continuous on $K$. Moreover, since
$\omega_{\alpha}(z,K,D,\psi)$ is continuous in $D$, we have
\[
\omega_{\alpha}(z,K,D,\psi) \equiv \omega_{\alpha}^{*}(z,K,D,\psi).
\]
Therefore,
\[
\omega_{\alpha}^{*}(z,K,D,\psi)=\psi(z),
\qquad z\in K.
\]
This means that $K$ is $(\alpha,\psi)$-regular.

Conversely, assume that $K$ is $(\alpha,\psi)$-regular. Then
\[
\omega_{\alpha}^{*}(z,K,D,\psi)=\psi(z),
\qquad z\in K.
\]
Hence, the function $\omega_{\alpha}^{*}(z,K,D,\psi)$ is an element of the class $\mathcal{U}(K,D,\psi)$ and  we find
\[
\omega_{\alpha}^{*}(z,K,D,\psi)
=
\omega_{\alpha}(z,K,D,\psi),
\qquad z\in D.
\]
It remains to prove that $\omega_{\alpha}^{*}(z,K,D,\psi)$ is continuous in $D$.
Since $\psi<0$ on $K$ and $\psi\in C(K)$, we may choose $\varepsilon>0$ such that
\[
\psi(z)<-\varepsilon, \qquad z\in K.
\]
By Proposition~\ref{extends}, the set
\[
G_{\varepsilon}
=
\{z\in D:\omega_{\alpha}^{*}(z,K,D,\psi)<-\varepsilon\}
\]
satisfies
$K\subset G_{\varepsilon}\Subset D.$
Since $\omega_{\alpha}^{*}(z,K,D,\psi)$ is $\alpha$-subharmonic in $D$, there exists a sequence of nonpositive functions
\[
u_j\in \alpha\text{-}sh(G)\cap C^{\infty}(G)
\]
such that
\[
\lim_{j \to \infty} u_j(z) =\omega_{\alpha}^{*}(z,K,D,\psi)
\]
for every $z\in G$, where
$G_{\varepsilon}\Subset G\Subset D$ (see \cite{ABIS}).

On the other hand, since $\psi\in C(K)$, by Whitney's extension theorem
(see \cite{WH}), there exists a continuous function $\widetilde{\psi}$ in $D$
such that
$\widetilde{\psi}|_K=\psi|_K.$
Consider the open set
\[
U_{\varepsilon}
=
\{z\in D:
\omega_{\alpha}^{*}(z,K,D,\psi)<\widetilde{\psi}(z)+\varepsilon\}.
\]
Clearly, $K\subset U_{\varepsilon}$. Applying Lemma~\ref{Hartogs} to the compact set $K$ and the open set
$U_{\varepsilon}$, we obtain $j_0\in\mathbb{N}$ such that
\[
u_j(z)<\psi(z)+\varepsilon,
\quad \forall z\in K,\quad \forall j\geq j_0.
\]
For any $j\geq j_0$, define
\[
v_j(z)=
\begin{cases}
\max\{u_j(z)-\varepsilon,\,
\omega_{\alpha}^{*}(z,K,D,\psi)\},
& z\in G_{\varepsilon},\\[2mm]
\omega_{\alpha}^{*}(z,K,D,\psi),
& z\in D\setminus G_{\varepsilon}.
\end{cases}
\]
By construction, $v_j\in \mathcal{U}(K,D,\psi)$; hence
\[
v_j(z)\leq \omega_{\alpha}^{*}(z,K,D,\psi),
\quad \forall z\in D, \quad \forall j\geq j_0 .
\]
In particular, for $z\in G_{\varepsilon}$ and $j\geq j_0$,
\[
u_j(z)-\varepsilon
\leq
\omega_{\alpha}^{*}(z,K,D,\psi).
\]
Fix an arbitrary point $z^0\in G_{\varepsilon}$. Taking the lower limit as
$z\to z^0$ in the last inequality and using the continuity of $u_j$, we obtain
\[
u_j(z^0)-\varepsilon
\leq
\liminf_{z\to z^0}
\omega_{\alpha}^{*}(z,K,D,\psi),
\qquad \forall j\geq j_0.
\]
Letting $j\to\infty$, we get
\[
\omega_{\alpha}^{*}(z^0,K,D,\psi)-\varepsilon
\leq
\liminf_{z\to z^0}
\omega_{\alpha}^{*}(z,K,D,\psi).
\]
Since $\varepsilon>0$ is arbitrary, it follows that
\[
\omega_{\alpha}^{*}(z^0,K,D,\psi)
\leq
\liminf_{z\to z^0}
\omega_{\alpha}^{*}(z,K,D,\psi).
\]
Thus, $\omega_{\alpha}^{*}(z,K,D,\psi)$ is lower semicontinuous at $z^0$.
Since $\omega_{\alpha}^{*}(z,K,D,\psi)$ is upper semicontinuous by definition, it is continuous at $z^0$. Since $z^0\in G_{\varepsilon}$ was arbitrary, $\omega_{\alpha}^{*}(z,K,D,\psi)$ is continuous in $G_{\varepsilon}$. Consequently, it is continuous in $D$.

Assume now that $K$ is an $(\alpha,\psi)$-regular compact set. We prove that
$\omega_{\alpha}^*(z,K,D,\psi)$ is the unique $\alpha$-subharmonic function in
$D$ satisfying the boundary value problem \eqref{HO}. By
Proposition~\ref{alpha-harmonic}, the function
$\omega_{\alpha}(z,K,D,\psi)$ satisfies the equation
$\Delta_{\alpha}u=0$ in $D\setminus K$. Moreover,
property~\ref{item:2} of Proposition~\ref{extends} gives the boundary condition
on $\partial D$, while the $(\alpha,\psi)$-regularity of $K$ gives the condition
on $K$. Hence, $\omega_{\alpha}(z,K,D,\psi)$ satisfies all the conditions in
\eqref{HO}. The uniqueness follows from the maximum principle in the class of
$\alpha$-subharmonic functions.
The proof is complete.
\end{proof}

By property~\ref{item:2} of Proposition~\ref{extends}, if we define
\[
\omega_{\alpha}^{*}(z,K,D,\psi)=0, \qquad z\in \partial D,
\]
then the continuity of $\omega_{\alpha}^{*}(z,K,D,\psi)$ in $D$ implies its continuous extendability to $\overline{D}$.

We next turn to the proof of Theorem~\ref{thm: regularity}.

\begin{proof}[proof of theorem \ref{thm: regularity}] We first prove the first assertion. We argue by showing the equivalence of the
negations of the two notions of local regularity.

Assume first that \(z_0\in K\) is not locally \((\alpha,\psi)\)-regular. Then there
exist a ball \(B\Subset D\), with \(z_0\in B\), and a number \(\lambda>0\) such that
\[
\omega_{\alpha}^{*}(z_0,K\cap \overline{B},D,\psi)
\geq \psi(z_0)+\lambda .
\]
By the monotonicity property, for every ball \(B_1\Subset B\) such that
\(z_0\in B_1\), we also have
\[
\omega_{\alpha}^{*}(z_0,K\cap \overline{B}_1,D,\psi)
\geq \psi(z_0)+\lambda .
\]
Combining this with Lemma~\ref{connection inequality}, we obtain
\[
\psi(z_0)+\lambda
\leq
\omega_{\alpha}^{*}(z_0,K\cap \overline{B}_1,D,\psi)
\leq
-\sup_{z\in K\cap \overline{B}_1}\psi(z)\,
\omega_{\alpha}^{*}(z_0,K\cap \overline{B}_1,D).
\]
Since \(\psi\) is continuous on \(K\), after decreasing \(B_1\), if necessary, we may assume that
\[
\sup_{z\in K\cap \overline{B}_1}\psi(z)<\psi(z_0)+\lambda<0 .
\]
Hence,

\[
\omega_{\alpha}^{*}(z_0,K\cap \overline{B}_1,D)>-1 
\]
and thus \(z_0\) is not locally \(\alpha\)-regular.

Conversely, suppose that \(z_0\in K\) is not locally \(\alpha\)-regular. Then there
exist a ball \(B\Subset D\), with \(z_0\in B\), and a number
\(0<\varepsilon<1\) such that
\[
\omega_{\alpha}^{*}(z_0,K\cap \overline{B},D)\geq -1+\varepsilon .
\]
Again, by monotonicity, for every ball \(B_1\Subset B\) with \(z_0\in B_1\), we have
\[
\omega_{\alpha}^{*}(z_0,K\cap \overline{B}_1,D)\geq -1+\varepsilon .
\]
Using Lemma~\ref{connection inequality}, we obtain
\[
\omega_{\alpha}^{*}(z_0,K\cap \overline{B}_1,D,\psi)
\geq
-\inf_{z\in K\cap \overline{B}_1}\psi(z)\,
\omega_{\alpha}^{*}(z_0,K\cap \overline{B}_1,D).
\]
Therefore,
\[
\omega_{\alpha}^{*}(z_0,K\cap \overline{B}_1,D,\psi)
\geq
\inf_{z\in K\cap \overline{B}_1}\psi(z)(1-\varepsilon).
\]
Since \(\psi\) is continuous on \(K\), choosing \(B_1\) sufficiently small and
\(\varepsilon>0\) sufficiently small, we may ensure that
\[
\inf_{z\in K\cap \overline{B}_1}\psi(z)>
\frac{\psi(z_0)}{1-\varepsilon}.
\]
Consequently,
\[
\omega_{\alpha}^{*}(z_0,K\cap \overline{B}_1,D,\psi)
>
\psi(z_0).
\]
This shows that \(z_0\) is not locally \((\alpha,\psi)\)-regular. Hence, the first
assertion is proved.

We now prove the second part of the theorem. It is clear that every locally
\((\alpha,\psi)\)-regular point is globally \((\alpha,\psi)\)-regular. We shall prove
the converse implication.
Assume that \(z^0\in K\) is globally \((\alpha,\psi)\)-regular. Then
\[
\omega_{\alpha}^*(z^0,K,D,\psi)=\psi(z^0).
\]
By the assumption of the theorem, the function \(\psi\) admits an extension
\(\widetilde{\psi}\) which is strictly \(\alpha\)-subharmonic in a domain
\(D^+\supset \overline{D}\) and satisfies
\[
\widetilde{\psi}|_K=\psi|_K,
\qquad
\widetilde{\psi}|_D<0.
\]
Therefore, there exists a constant \(\varepsilon>0\) such that the function
\[
\widetilde{\psi}(z)-\varepsilon |z-z^0|^2
\]
is \(\alpha\)-subharmonic in \(D\).
Choose \(r>0\) such that
\[
B_r=B(z^0,r)\Subset D
\]
and
\[
\max_{z\in \overline{B}_r}\widetilde{\psi}(z)+\varepsilon r^2<0.
\]
Let \(u\in \mathcal{U}(K\cap \overline{B}_r,D)\) be arbitrary and define
\[
\varphi(z)
=
\varepsilon r^2\bigl(u(z)+1\bigr)
+
\widetilde{\psi}(z)
-
\varepsilon |z-z^0|^2 .
\]
Then \(\varphi\) is \(\alpha\)-subharmonic in $D.$
We claim that \(\varphi\in \mathcal{U}(K,D,\psi)\). Indeed, if
\(z\in K\cap \overline{B}_r\), then \(u(z)\leq -1\), and hence
\[
\varphi(z)\leq \widetilde{\psi}(z)=\psi(z).
\]
On the other hand, if \(z\in K\setminus \overline{B}_r\), then
\(|z-z^0|\geq r\). Since \(u<0\) in \(D\), we have
\[
\varepsilon r^2\bigl(u(z)+1\bigr)-\varepsilon |z-z^0|^2\leq 0.
\]
Therefore,
\[
\varphi(z)\leq \widetilde{\psi}(z)=\psi(z),
\qquad z\in K.
\]
Moreover, by the choice of \(r\), we have \(\varphi<0\) in \(D\). Thus
\[
\varphi\in \mathcal{U}(K,D,\psi).
\]
Consequently,
\[
\varphi(z)\leq \omega_{\alpha}^*(z,K,D,\psi),
\quad \forall z\in D.
\]
Since \(u\in \mathcal{U}(K\cap \overline{B}_r,D)\) was arbitrary, it follows that
\[
\varepsilon r^2
\left(
\omega_{\alpha}^*(z,K\cap \overline{B}_r,D)+1
\right)
+
\widetilde{\psi}(z)
-
\varepsilon |z-z^0|^2
\leq
\omega_{\alpha}^*(z,K,D,\psi),
\quad \forall z\in D.
\]
Putting \(z=z^0\), we obtain
\[
\varepsilon r^2
\left(
\omega_{\alpha}^*(z^0,K\cap \overline{B}_r,D)+1
\right)
+
\psi(z^0)
\leq
\omega_{\alpha}^*(z^0,K,D,\psi)
=
\psi(z^0).
\]
It follows that
\[
\omega_{\alpha}^*(z^0,K\cap \overline{B}_r,D)\leq -1.
\]
On the other hand, by Definition~\ref{def of asm}, we have
\[
\omega_{\alpha}^*(z^0,K\cap \overline{B}_r,D)\geq -1.
\]
Consequently,
\[
\omega_{\alpha}^*(z^0,K\cap \overline{B}_r,D)=-1.
\]
Hence \(z^0\) is locally \(\alpha\)-regular. Therefore, by the first part of
the theorem, \(z^0\) is locally \((\alpha,\psi)\)-regular. 
The proof is complete.
\end{proof}
 
By Theorem~\ref{thm: regularity}, we obtain the following important corollaries.

\begin{corollary}
Let $K \subset D$ be a compact set. Suppose that $K$ is globally
$(\alpha,\psi)$-regular, where $\psi \in C(K)$ can be extended to a strictly
$\alpha$-subharmonic function in some neighborhood $D^+ \supset \overline{D}$
such that the extension belongs to $\mathcal{U}(K,D,\psi)$. Then $K$ is locally
$\alpha$-regular.
\end{corollary}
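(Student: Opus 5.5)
This corollary follows directly from the two parts of Theorem~\ref{thm: regularity}, applied pointwise. Fix an arbitrary point $z^0\in K$. By hypothesis $K$ is globally $(\alpha,\psi)$-regular, so
\[
\omega_{\alpha}^*(z^0,K,D,\psi)=\psi(z^0).
\]
The extension $\widetilde{\psi}$ is strictly $\alpha$-subharmonic in $D^+\supset\overline{D}$. Its membership in $\mathcal{U}(K,D,\psi)$ gives $\widetilde{\psi}|_K\le\psi|_K$, and together with the extension property $\widetilde{\psi}|_K=\psi|_K$ and $\widetilde{\psi}<0$ on $D$. These are exactly the hypotheses of part (2) of Theorem~\ref{thm: regularity}. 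Hence $z^0$ is locally $(\alpha,\psi)$-regular.

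Next I apply part (1) of Theorem~\ref{thm: regularity}, which uses only $\psi\in C(K)$. It states that local $(\alpha,\psi)$-regularity at $z^0$ is equivalent to local $\alpha$-regularity, that is,
\[
\omega_{\alpha}^*(z^0,K\cap\overline{B},D)=-1
\]
for every neighborhood $B$ of $z^0$. Since $z^0\in K$ was arbitrary, every point of $K$ is locally $\alpha$-regular. By Definition~\ref{reg}, $K$ is a locally $\alpha$-regular compact set.

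Alternatively, the proof of part (2) of Theorem~\ref{thm: regularity} already shows $\omega_{\alpha}^*(z^0,K\cap\overline{B}_r,D)=-1$ for all small balls $B_r$ centred at $z^0$. Monotonicity (Proposition~\ref{prop: properties of alpha-measure}) then extends this to every neighborhood $B$: each such $B$ contains some $\overline{B}_r$, so $K\cap\overline{B}_r\subset K\cap\overline{B}$ and $\omega_\alpha^*(z^0,K\cap\overline{B},D)\le -1$, with equality because the measure is always $\ge -1$. This route does not need part (1).

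The main point to check is that the corollary's hypotheses match those of part (2) of the theorem. The negativity condition $\widetilde\psi<0$ on $D$ comes from $\widetilde\psi\in\mathcal{U}(K,D,\psi)$. Strict $\alpha$-subharmonicity on a neighborhood of $\overline{D}$ is what supplies a single $\varepsilon>0$ for which $\widetilde\psi-\varepsilon|z-z^0|^2$ is $\alpha$-subharmonic on $D$, and it is assumed directly in the corollary. Beyond this verification, the proof is a direct chaining of the two parts of the theorem.
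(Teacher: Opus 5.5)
Your proof is correct and is essentially the paper's argument: the paper derives this corollary directly from Theorem~\ref{thm: regularity}, chaining part (2) (global $\Rightarrow$ local $(\alpha,\psi)$-regularity under the strictly $\alpha$-subharmonic extension hypothesis, which you correctly match to the corollary's assumptions) with part (1) (local $(\alpha,\psi)$-regularity $\Leftrightarrow$ local $\alpha$-regularity). Your alternative route is also valid, including the monotonicity step that passes from small balls to arbitrary neighborhoods. It mirrors how the paper's proof of part (2) is organized: that proof first establishes $\omega_{\alpha}^*(z^0,K\cap\overline{B}_r,D)=-1$ and only then invokes part (1), so the round trip through local $(\alpha,\psi)$-regularity is harmless but unnecessary.
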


\begin{corollary}
Let $\psi_1,\psi_2 \in C(K)$, and suppose that each $\psi_j$, $j=1,2$, can be
extended to a strictly $\alpha$-subharmonic function in some neighborhood
$D^+ \supset \overline{D}$ such that the extension belongs to
$\mathcal{U}(K,D,\psi_j)$. Then a point $z^0 \in K \subset D$ is
$(\alpha,\psi_1)$-regular if and only if it is $(\alpha,\psi_2)$-regular.
\end{corollary}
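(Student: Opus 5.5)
The plan is to route both regularity notions through a condition that does not involve the weight at all, namely local $\alpha$-regularity of $z^0$. Theorem~\ref{thm: regularity} already gives two equivalences, and chaining them for $\psi_1$ and for $\psi_2$ should give the corollary with no new estimates.

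\textbf{Step 1: check the hypotheses of Theorem~\ref{thm: regularity}.} Fix $j\in\{1,2\}$ and let $\widetilde{\psi}_j$ be the given strictly $\alpha$-subharmonic extension of $\psi_j$ to $D^+\supset\overline{D}$. Since $\widetilde{\psi}_j\in\mathcal{U}(K,D,\psi_j)$, we get $\widetilde{\psi}_j<0$ on $D$ and $\widetilde{\psi}_j|_K\le\psi_j|_K$. Because $\widetilde{\psi}_j$ is an extension, in fact $\widetilde{\psi}_j|_K=\psi_j|_K$. Also $\psi_j\in C(K)$ and $\psi_j<0$ on the compact set $K$, so $\sup_K\psi_j<0$; hence the weighted measure and condition \eqref{omega=psi in E} are well defined. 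These are exactly the hypotheses of both parts of Theorem~\ref{thm: regularity} with $\psi=\psi_j$.

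\textbf{Step 2: reduce to local regularity.} By part (2) of Theorem~\ref{thm: regularity}, $z^0$ is $(\alpha,\psi_j)$-regular if and only if it is locally $(\alpha,\psi_j)$-regular.

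\textbf{Step 3: remove the weight.} By part (1) of Theorem~\ref{thm: regularity}, which only needs $\psi_j\in C(K)$, $z^0$ is locally $(\alpha,\psi_j)$-regular if and only if it is locally $\alpha$-regular. That is, $\omega_\alpha^*(z^0,K\cap\overline{B},D)=-1$ for every neighborhood $B$ of $z^0$, and this condition does not depend on $j$.

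\textbf{Step 4: chain the equivalences.} Combining Steps 2 and 3 for $j=1$ and $j=2$ gives
\[
(\alpha,\psi_1)\text{-regular}\iff\text{locally }\alpha\text{-regular}\iff(\alpha,\psi_2)\text{-regular},
\]
which is the claim.

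There is no real obstacle here. The only point needing care is Step 1: confirming that the extension belongs to $\mathcal{U}(K,D,\psi_j)$, so that it is negative on $D$ and agrees with $\psi_j$ on $K$, which is what Theorem~\ref{thm: regularity}(2) requires.
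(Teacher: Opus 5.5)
Your proposal is correct and is exactly the argument the paper has in mind: the corollary is stated as an immediate consequence of Theorem~\ref{thm: regularity}, obtained by chaining two equivalences for $j=1,2$. Part (2) gives global $\iff$ local $(\alpha,\psi_j)$-regularity under the strictly $\alpha$-subharmonic extension hypothesis, and part (1) gives local $(\alpha,\psi_j)$-regularity $\iff$ the weight-free local $\alpha$-regularity; your Step 1 correctly checks that the extension lying in $\mathcal{U}(K,D,\psi_j)$ supplies $\widetilde{\psi}_j<0$ on $D$ and $\widetilde{\psi}_j|_K=\psi_j|_K$, which is what part (2) requires.
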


\begin{corollary}
Let $K \subset D$ be a compact set. Suppose that $K$ is globally
$(\alpha,\psi)$-regular, where $\psi \in C(K)$ can be extended to a strictly
$\alpha$-subharmonic function in some neighborhood $D^+ \supset \overline{D}$
such that the extension belongs to $\mathcal{U}(K,D,\psi)$. Then, for every
$z^0 \in K$ and every neighborhood $B \subset D$ of $z^0$, the set
$E=B\cap K$ is not $\alpha$-polar.
\end{corollary}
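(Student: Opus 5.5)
The plan is to derive the statement from Theorem~\ref{thm: regularity} combined with the characterization of $\alpha$-polar sets in Proposition~\ref{prop: properties of alpha-measure}(6). The argument is by contradiction: local $\alpha$-regularity at $z^0$ forces the $\alpha$-subharmonic measure of small pieces of $K$ near $z^0$ to equal $-1$ at $z^0$. An $\alpha$-polar piece would instead have measure identically $0$.

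\emph{Step 1: check the hypotheses of Theorem~\ref{thm: regularity}(2).} Let $\widetilde{\psi}$ be the given strictly $\alpha$-subharmonic extension of $\psi$ to $D^+\supset\overline{D}$. Since $\widetilde{\psi}\in\mathcal{U}(K,D,\psi)$, it satisfies $\widetilde{\psi}<0$ on $D$ and $\widetilde{\psi}|_K\le\psi|_K$. Because $\widetilde{\psi}$ extends $\psi$, we have $\widetilde{\psi}|_K=\psi|_K$. Together with $\psi\in C(K)$, these are exactly the hypotheses of part (2) of Theorem~\ref{thm: regularity}.

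\emph{Step 2: pass to local $\alpha$-regularity.} Fix $z^0\in K$. Since $K$ is globally $(\alpha,\psi)$-regular, part (2) of Theorem~\ref{thm: regularity} shows that $z^0$ is locally $(\alpha,\psi)$-regular. Part (1) of the same theorem then shows that $z^0$ is locally $\alpha$-regular. Explicitly, for every ball $B_1$ centred at $z^0$,
\[
\omega_{\alpha}^*(z^0,K\cap\overline{B}_1,D)=-1 .
\]

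\emph{Step 3: conclude by contradiction.} Let $B\subset D$ be a neighborhood of $z^0$, and suppose $E=B\cap K$ were $\alpha$-polar. Choose a ball $B_1$ centred at $z^0$ with $\overline{B}_1\subset B$. Then $K\cap\overline{B}_1\subset E$.

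Any subset of an $\alpha$-polar set is $\alpha$-polar. Indeed, if $\rho\in\alpha\text{-}sh(D)$, $\rho\not\equiv-\infty$, and $\rho|_E=-\infty$, then the same $\rho$ works for the subset. Hence $K\cap\overline{B}_1$ is $\alpha$-polar.

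By Proposition~\ref{prop: properties of alpha-measure}(6), this gives $\omega_{\alpha}^*(z,K\cap\overline{B}_1,D)\equiv0$ in $D$. This contradicts the value $-1$ at $z^0$ obtained in Step 2. Therefore $E$ is not $\alpha$-polar.

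No step here is a real obstacle, since all the analytic work is contained in Theorem~\ref{thm: regularity}. The one point needing care is that $E=B\cap K$ need not be compact. This is why the argument works with the compact subset $K\cap\overline{B}_1$, to which the definition of local $\alpha$-regularity applies directly.
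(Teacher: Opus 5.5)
Your proof is correct and is essentially the argument the paper intends. The paper states this corollary as an immediate consequence of Theorem~\ref{thm: regularity}: part (2) gives local $(\alpha,\psi)$-regularity, part (1) gives local $\alpha$-regularity, and the characterization of $\alpha$-polar sets in Proposition~\ref{prop: properties of alpha-measure}(6) finishes the argument. Your reduction to the compact set $K\cap\overline{B}_1\subset E$ (via heredity of $\alpha$-polarity, or equivalently monotonicity of $\omega_\alpha^*$ in the set) supplies the small detail the paper leaves implicit.
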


\section{Hölder continuity of the weighted $\alpha$-subharmonic measure} \label{Holder section}

In this section, by adapting the method used in \cite{KKRK}, we prove
Theorem~\ref{Holder}, which establishes the H\"older continuity of the
weighted $\alpha$-subharmonic measure.

\begin{proof}[Proof of Theorem~\ref{Holder}]
The H\"older continuity of $\omega_{\alpha}(z,K,D,\psi)$ in $D$ immediately
implies the H\"older continuity of $\psi$ on $K$ and condition
\eqref{Holdercondition}. Therefore, we prove the converse implication. Namely,
we show that the H\"older continuity of $\psi$ on $K$, together with condition
\eqref{Holdercondition}, implies the H\"older continuity of
$\omega_{\alpha}(z,K,D,\psi)$ in $D$.

Since \(D\) is strongly \(\alpha\)-regular,
there exist a neighborhood \(D^+\) of \(D\) and a function
\(\rho \in \alpha\text{-}sh(D^+) \cap C^2(D^+)\) such that
\[
D=\{ z \in D^+ : \rho(z)<0 \}.
\]
 In addition, $\psi(z)$ is negative in $K$ and the function $\omega_{\alpha}^*(z,K,D,\psi)$ can be extended from $D$ to $D^+$ as  an $\alpha$-subharmonic function, i.e., the following function
 \[
 \tilde \omega^*(z)= \begin{cases}\omega_{\alpha}^*(z,K,D,\psi), \, \, z \in D \\
 \rho(z), \, \, z \in D^+ \setminus D.
 \end{cases}
 \]
 is $\alpha$-subharmonic in $D^+$.  Condition~\eqref{Holdercondition} implies that
\[
\omega_{\alpha}^*(z,K,D,\psi)=\psi(z)
\]
for every \(z\in K\). Therefore, \(K\) is an \((\alpha,\psi)\)-regular compact set. Consequently, for all $z \in D$, we have 
 \[\omega_{\alpha}^*(z,K,D,\psi)=\omega_{\alpha}(z,K,D,\psi).\]
 Consider the following function
 \[
 \tilde\omega(z)= \begin{cases} \omega_{\alpha}(z,K,D,\psi), \, \, z \in D \\
 \rho(z), \, \, z \in D^+ \setminus D. \end{cases}
 \]
It is clear that, for every $z\in D^+$, we have
$\widetilde{\omega}(z)=\widetilde{\omega}^{*}(z),$
and that $\widetilde{\omega}$ is $\alpha$-subharmonic in $D^+$.
 Since $\rho\in C^2(D^+)$, it satisfies a Lipschitz condition on every domain
$G\Subset D^+$. That is, there exists a constant $L=L(G)>0$ such that, for all
$z',z''\in G$,
\[
|\rho(z')-\rho(z'')|\leq L|z'-z''|.
\]
We fix a domain $G$ such that
$D\Subset G\Subset D^+.$
  We take $\delta$ sufficiently small, so the relation
 \[
 0< \delta< \min \{dist(K,\partial D, dist(D,\partial G), 1 \}
 \]
 holds. Thus, it suffices to show that there exist constants $A >0$ and $\lambda \in (0,1]$ such that, for all $z', \, z'' \in D$ with $|z'-z''| \leq \delta$, the inequality
 \[
 |\omega_{\alpha}(z',K,D,\psi)-\omega_{\alpha}(z'',K,D,\psi)| \leq A |z'-z''|^{\lambda}
 \] holds.
Since $\psi(z)$ is H\"older continuous in $K$, there exist constants $C_1>0$ and $\lambda_1 \in (0,1]$ such that for any $ z ', \, z'' \in K$, 
\[
|\psi(z')-\psi(z'')| \leq C_1 |z'-z''|^{\lambda_1}
\]
is valid. 
Now, let's fix $t \in \mathbb{C}^n$ such that $|t| = \delta$. Then, the function $\tilde \omega(z+t)$ is $\alpha-$subharmonic for every point $z \in D$. Let $ w' \in K$ be a point such that 
\[|z+t-w'|=dist(z+t,K).\]
Thus, for $z \in K$, we have 
$|z+t-w'| \leq \delta,$
also, by the triangle inequality, we get 
$|z-w'| \leq 2 \delta.$
Since $|t| = \delta$ is valid, for all $z \in K$, $z+t \in D$. Therefore, for all $ z \in K$, we get 
\[
|\omega_{\alpha}(z+t,K,D,\psi)-\psi(z)|=|\omega_{\alpha}(z+t,K,D,\psi)-\psi(w')+\psi(w')-\psi(z)| \leq \]
\[
\leq |\omega_{\alpha}(z+t,K,D,\psi)-\psi(w')|+|\psi(w')-\psi(z)| \leq C_2 \delta^{\lambda_2}, 
\]
where $C_2=3\max\{C,C_1\}$ and $\lambda_2=\min\{\lambda, \lambda_1\}$. Thus, it follows that, for all $z \in K$, the relation 
\[
\omega_{\alpha}(z+t,K,D,\psi)-C_2 \delta^{\lambda_2} \leq \psi(z)
\]
holds. As the function $\rho(z)$ satisfies the Lipwitz condition in $G$, and from the conditions $|t| \leq \delta$ and $\rho|_{\partial D}=0$, we find that, for the points $z$ with $z+t \not \in D$, the relation
\[\tilde \omega(z+t)=\rho(z+t) \leq L \delta\]
holds. Therefore, the function $\tilde \omega(z+t)-C_3 \delta^{\lambda_2}$ belongs to the class $\mathcal{U}(K,D,\psi)$, where $C_3 =\max\{L,C_2\}$. As a result, we get
\[
\tilde \omega(z+t)-C_3 \delta^{\lambda_2} \leq \omega_{\alpha}(z,K,D,\psi), \quad z \in D.
\]
Apart from that, for all $z \in D$ with $z+t \in D$, we have 
\begin{equation}\label{inequality1}
\omega_{\alpha}(z+t,K,D,\psi)-\omega_{\alpha}(z,K,D,\psi) \leq C_3 \delta^{\lambda_2}.   
\end{equation}
Note that, the point $w\in K$ satisfies the condition $dist(z,K)=|z-w|$. Therefore, it is evident that
\[|z-w| \leq \delta\] 
for $z +t \in K$. In addition, by the triangle inequality, we get that
\[|z+t-w| \leq 2 \delta.\]
Thus, for $z \in D, \, z+t \in K$, we have following estimate
\[
|\omega_{\alpha}(z,K,D, \psi)-\psi(z+t)| \leq |\omega_{\alpha}(z,K,D,\psi)-\psi(w)|+|\psi(w)-\psi(z+t)| \leq C_2 \delta^{\lambda_2}.
\]
As a result, we get
\[\omega_{\alpha}(z,K,D,\psi)-C_2 \delta^{\lambda_2} \leq \psi(z+t), \quad z+t \in K.\] 
Consequently, we obtain that 
\[
\tilde \omega(z)-C_3 \delta^{\lambda_2} \leq \omega_{\alpha}(z+t,K,D,\psi), \quad z+t \in D.
\]
Therefore, for $z \in D$, $z+t \in D$, we have that
\begin{equation}\label{inequality2}
\omega_{\alpha}(z,K,D,\psi)-\omega_{\alpha}(z+t,K,D,\psi) \leq C_3 \delta^{\lambda_2}.   
\end{equation}
Hence, from the relations \eqref{inequality1} and \eqref{inequality2}, the H\"older continuity of the function $\omega_{\alpha}(z,K,D,\psi)$ in $D$ follows.
The proof is complete.
\end{proof}

The following result follows from the proof of Theorem~\ref{Holder}.

\begin{corollary}
Let \(D \subset \mathbb{C}^n\) be a strongly \(\alpha\)-regular domain, and let
\(K\subset D\) be a compact set. If \(\omega_\alpha^*(z,K,D)\) is H\"older
continuous in \(D\), then it admits an \(\alpha\)-subharmonic and H\"older
continuous extension to some neighborhood of \(\overline{D}\).
\end{corollary}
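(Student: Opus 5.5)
The plan is to extend $\omega_\alpha^*(z,K,D)$ across $\partial D$ by a multiple of the defining function $\rho$ of the strongly $\alpha$-regular domain $D$, exactly as in the proof of Theorem~\ref{Holder}. I then check separately that the glued function is $\alpha$-subharmonic and that it is H\"older continuous. Throughout, $\rho\in\alpha\text{-}sh(D^+)\cap C^2(D^+)$ with $D=\{\rho<0\}$.

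\textbf{Step 1: the barrier and the glued function.} Put $m=-\max_K\rho>0$. Then $\rho/m$ is $\alpha$-subharmonic, negative in $D$, and $\le -1$ on $K$, so $\rho/m\in\mathcal{U}(K,D)$. Hence
\[
\rho/m\le \omega_\alpha^*(z,K,D)\le 0 \quad\text{in } D .
\]
Define $\widetilde\omega=\omega_\alpha^*(\cdot,K,D)$ on $D$ and $\widetilde\omega=\rho/m$ on $D^+\setminus D$.

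\textbf{Step 2: $\alpha$-subharmonicity via the Poisson characterization \eqref{puasson}.} At points of $D$ and of $D^+\setminus\overline D$ the function $\widetilde\omega$ is locally one of the two $\alpha$-subharmonic functions, so there is nothing to check. At $z^0\in\partial D$ we have $\widetilde\omega(z^0)=\rho(z^0)/m=0$. By Step 1, $\widetilde\omega\ge\rho/m$ everywhere on $D^+$, so
\[
\widetilde\omega(z^0)=\frac{\rho(z^0)}{m}\le\int P_\alpha\,\frac{\rho}{m}\,d\sigma\le\int P_\alpha\,\widetilde\omega\,d\sigma ,
\]
which is the sub-mean inequality at $z^0$. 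Upper semicontinuity at $z^0$ follows from Proposition~\ref{extends}\,(\ref{item:2}), since $\omega_\alpha^*\to0$ at $\partial D$. Strong upper semicontinuity follows from the same limit and the continuity of $\rho$.

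\textbf{Step 3: H\"older continuity on a neighborhood $G$ with $D\Subset G\Subset D^+$.} Assume $|\omega_\alpha^*(z')-\omega_\alpha^*(z'')|\le A|z'-z''|^\lambda$ on $D$. Since $\omega_\alpha^*$ extends continuously by $0$ to $\partial D$, this estimate passes to $\overline D$. On $G\setminus D$, $\rho$ is Lipschitz with constant $L$. For $z'\in D$ and $z''\in G\setminus D$, pick a point $\zeta\in\partial D$ on the segment $[z',z'']$, which exists by connectedness. Then
\[
|\widetilde\omega(z')-\widetilde\omega(z'')|\le |\widetilde\omega(z')-\widetilde\omega(\zeta)|+|\rho(\zeta)-\rho(z'')|/m\le A|z'-z''|^\lambda+(L/m)|z'-z''| ,
\]
using $\widetilde\omega(\zeta)=\rho(\zeta)=0$. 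For $|z'-z''|\le1$ this is $\le(A+L/m)|z'-z''|^\lambda$. For $|z'-z''|>1$, boundedness of $\widetilde\omega$ on $G$ gives the bound with a larger constant.

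\textbf{Main obstacle.} The delicate point is Step 2 at boundary points. The gluing must produce a genuinely $\alpha$-subharmonic function, and that relies on the global minorant $\rho/m\le\widetilde\omega$ together with the vanishing boundary limit of Proposition~\ref{extends}\,(\ref{item:2}). Once those two facts are in place, the H\"older estimate in Step 3 is routine.
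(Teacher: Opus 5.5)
Your proof is correct and takes the paper's route: extend $\omega_\alpha^*(z,K,D)$ by the defining function outside $D$, as in the proof of Theorem~\ref{Holder}, then combine the H\"older bound inside $D$ with the Lipschitz bound on $\rho$ outside. You also make explicit two points the paper leaves implicit: gluing with $\rho/m$ instead of $\rho$ itself is what guarantees $\alpha$-subharmonicity at $\partial D$ (the paper's unscaled gluing can fail unless $\rho$ is first normalized so that $\rho\le\omega_\alpha^*$ in $D$, e.g.\ $\max_K\rho\le -1$), and your segment argument through a point $\zeta\in\partial D$ gives the H\"older estimate across the boundary, which the paper never writes out.
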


Moreover, as an immediate consequence of Theorem~\ref{Holder}, by taking
$\psi\equiv -1$, we obtain the following corollary, which corresponds to the
unweighted case.

\begin{corollary}
Let \(D \subset \mathbb{C}^n\) be a strongly \(\alpha\)-regular domain, and let
\(K\subset D\) be a compact set. Then the function
\(\omega_\alpha(z,K,D)\) is H\"older continuous in \(D\) if and only if there
exist constants \(C>0\) and \(\gamma\in(0,1]\) such that
$$1+\omega_{\alpha}(z,K,D)
\leq C\,\operatorname{dist}(z,K)^\gamma$$
holds in some neighborhood of \(K\).
\end{corollary}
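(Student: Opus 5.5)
The statement in question is the final corollary: for a strongly $\alpha$-regular domain $D$ and compact $K\subset D$, the function $\omega_\alpha(z,K,D)$ is H\"older continuous in $D$ if and only if $1+\omega_\alpha(z,K,D)\le C\operatorname{dist}(z,K)^\gamma$ near $K$. The plan is to obtain it as a direct specialization of Theorem~\ref{Holder} with $\psi\equiv -1$. Two things must be checked first. The constant function $\psi\equiv-1$ is trivially H\"older continuous on $K$. It is also bounded with $\sup_K\psi<0$, so $\omega_\alpha(z,K,D,-1)=\omega_\alpha(z,K,D)$ and the hypotheses of Theorem~\ref{Holder} are met.

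The key step is to match condition \eqref{Holdercondition} with the one-sided inequality in the corollary. With $\psi(w)=-1$ for every closest point $w\in K$, condition \eqref{Holdercondition} reads $|\omega_\alpha(z,K,D)+1|\le C\operatorname{dist}(z,K)^\lambda$. Since every $u\in\mathcal{U}(K,D)$ satisfies $u\ge$ nothing in particular but the constant $-1$ belongs to $\mathcal{U}(K,D)$, we have $\omega_\alpha(z,K,D)\ge -1$ on $D$. Hence $1+\omega_\alpha(z,K,D)\ge 0$ and the absolute value can be dropped. So \eqref{Holdercondition} with $\psi\equiv-1$ is exactly the displayed inequality, with $\gamma=\lambda$.

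For the direction ``H\"older continuous $\Rightarrow$ estimate'', one point needs care: we must know $\omega_\alpha(w,K,D)=-1$ for $w\in K$. By \eqref{omega=psi in E} this holds, since $\psi\equiv-1$ extends to the negative $\alpha$-subharmonic function $-1$ on $D$. Then, for $z$ near $K$ and a closest point $w$, H\"older continuity gives $1+\omega_\alpha(z)=\omega_\alpha(z)-\omega_\alpha(w)\le A|z-w|^{\lambda}=A\operatorname{dist}(z,K)^\lambda$.

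The converse direction is then Theorem~\ref{Holder} verbatim. I expect no real obstacle. The only step needing attention is the sign argument that removes the absolute value, together with the identification $\omega_\alpha=-1$ on $K$ in the forward direction.
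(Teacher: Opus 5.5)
Your proposal is correct and follows the paper's route: the paper obtains this corollary as an immediate specialization of Theorem~\ref{Holder} to $\psi\equiv-1$. Your additional checks only make explicit what the paper leaves implicit. These are that $-1\in\mathcal{U}(K,D)$ gives $1+\omega_\alpha(z,K,D)\ge 0$, so the absolute value in \eqref{Holdercondition} can be dropped, and that $\omega_\alpha=-1$ on $K$ for the forward direction.
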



\begin{thebibliography}{99}  
\bibitem{ABIS} Abdullaev B., Imomkulov S., Sharipov R., \emph{$\alpha-$subharmonic functions.} Contemporary Mathematics. Fundamental directions. 67 (2021), no.4, p. 620--633. 
\bibitem{ABSA} Abdullaev B., Sadullaev A., \emph{Potential theory in the class of $m \text{-}sh$ functions.} Proceedings of the Steklov Institute of Mathematics. 279 (2012), no. 1,  p. 155--180.
\bibitem{ABSR} Abdullaev B., Sharipov R., \emph{Local and global $\alpha-$polar sets.} Bulletin of the Institute of Mathematics. 5 (2019),  p. 4--8.
\bibitem{BZ} Blocki Z., \emph{Weak solutions to the complex Hessian equation.} Ann. Ins. Fourier, Grenoble. 55 (2005), no.5, p. 1735--1756.

\bibitem{DSKS1} Dinew S. and  Kolodziej S.,  \emph{ A priori estimates for the complex Hessian equation.} Analysis and PDE. 7 (2014), no. 1,  p. 227--244.

\bibitem{DSKS2}
Dinew S.  and  Kolodziej S.,   \emph{Non standard properties of $m$-subharmonic functions.} Dolomites Research Notes on Approximation. 11(2018), no. 4,  p. 35--50.


\bibitem{JP} Demailly J.-P. , \emph{Complex analytic and differential geometry.} Universite de Grenoble, France, 2012.
\bibitem{DT} Gilbarg D., Trudinger N., \emph{Elliptic partial differential equations of second order.} New York, Springer, 1998.


\bibitem{KK} Kolmogorov A. N., Fomin S. V., \emph{Elements of the theory of functions and functional analysis.} Graylock Press, Albany N.Y., Vol. 2.  1961.  

\bibitem{KKRK} Kuldoshev K. and Rakhimov K., \emph{On the H\"older continuity of weighted extremal functions.} Journal of Siberian Federal University. Mathematics and Physics. 18 (2025),  no. 6, p. 809--818.
\bibitem{MR} Miranda C., \emph{Partial differential equations of elliptic type.} Moscow, 1957. 

\bibitem{SAZA} Sadullaev A.,  Zeriahi A.,  \emph{Hölder regularity of geneic manifolds.} Ann. Sc. Norm. Super. Pisa Cl. Sci. 16 (2016), no. 5, p. 369--382.

\bibitem{AS} Sadullaev A., \emph{Pluripotential  theory. Applications}. Palmarium Academic Publishing, 2012.

\bibitem{SAA} Sadullaev A., \emph{Plurisubharmonic measures and capacities on complex manifolds.} Russian Math. Surveys.  36 (1981), no. 4, p. 61--119.
\bibitem{SR} Sharipov R., \emph{$P-$measure and $P-$capacity in the class of $\alpha-$subharmonic functions.} Lectures of the Academy of Sciences of the Republic of Uzbekistan. 3 (2019),   p. 11--15. 

\bibitem{JS1} Siciak J., \emph {Wiener’s type sufficient conditions in $\mathbb{C}^n$. }  Universitatis Iagellonicae Acta Mathematica. 35 (1997),  p. 151-161.
\bibitem{VM} Vaisova M., \emph{ Potential theory in the class of $\alpha-$subharmonic functions.} Uzbek Mathematical Journal, 3(2016), p. 46--52.
\bibitem{WH} Whitney H., \emph{Analytic extensions of differentiable functions defined in closed sets.} Transactions of the American Mathematical Society 36 (1934), p. 63--89.

\end{thebibliography}
\end{document}